\newcommand{\lvertiii}{{\left\vert\kern-0.25ex\left\vert\kern-0.25ex\left\vert}}
\newcommand{\rvertiii}{{\right\vert\kern-0.25ex\right\vert\kern-0.25ex\right\vert}}
\newtheorem{assume}[theorem]{Assumption}
\newtheorem{remark}[theorem]{Remark}
\title{An Adaptive Step Size Strategy for Orthogonality Constrained Line Search Methods
\thanks{This work was supported by the National Science
Foundation of China under grants 91730302 and 11671389 and the Key Research Program of Frontier Sciences of the Chinese Academy of Sciences under grant QYZDJ-SSW-SYS010.}}
\author{Xiaoying Dai\footnotemark[2], Liwei Zhang\footnotemark[2],  and Aihui Zhou\footnotemark[2]}
\begin{document}

\maketitle

\renewcommand{\thefootnote}{\fnsymbol{footnote}}

\footnotetext[2]{LSEC, Institute of Computational Mathematics and Scientific/Engineering Computing,
Academy of Mathematics and Systems Science, Chinese Academy of Sciences,  Beijing 100190, China; and School of Mathematical Sciences,
University of Chinese Academy of Sciences, Beijing 100049, China. \{daixy, zhanglw, azhou\}@lsec.cc.ac.cn}

\author{
Xiaoying Dai\thanks{LSEC, Institute of Computational Mathematics
and Scientific/Engineering Computing, Academy of Mathematics and
Systems Science, Chinese Academy of Sciences, Beijing 100190, China
(daixy@lsec.cc.ac.cn).} \and  Liwei Zhang\thanks{LSEC, Institute of Computational Mathematics
and Scientific/Engineering Computing, Academy of Mathematics and
Systems Science, Chinese Academy of Sciences, Beijing 100190, China
(zhanglw@lsec.cc.ac.cn).}
\and Aihui Zhou\thanks{LSEC, Institute of Computational Mathematics
and Scientific/Engineering Computing, Academy of Mathematics and
Systems Science, Chinese Academy of Sciences, Beijing 100190, China
(azhou@lsec.cc.ac.cn).}
}


\begin{abstract}
  In this paper, we propose an adaptive step size strategy for a class of line search methods for orthogonality constrained minimization problems, which avoids the classic backtracking procedure.
  We prove the convergence of the line search methods equipped with our adaptive step size strategy under some mild assumptions. We then apply the adaptive algorithm to electronic structure calculations. The numerical results show that our strategy is efficient and recommended.
\end{abstract}

\begin{keywords}
adaptive step size strategy, convergence, Kohn-Sham energy functional, minimization problem, orthogonality constraint
\end{keywords}

\begin{AMS} 49Q10, 65K99, 81Q05, 90C30\end{AMS}


\section{Introduction}
Orthogonality constrained minimization problem
\begin{equation}\label{general-form}
\min_{U\in \mathcal{M}^N} \ \ \ E(U),
\end{equation}
is a typical model in modern scientific and engineering computing, including the extreme
eigenvalue problem \cite{Golub,Sa,Smith94}, the low-rank correlation matrix
problem \cite{HMWY,Van}, the leakage interference minimization problem \cite{LDL}, and the
Kohn-Sham Density Functional Theory(DFT) in electronic structure calculations
\cite{DLZZ,HMWY,Ma,SRNB,ZZWZ}. Here $E(U)$ is an energy functional on a Stiefel manifold $$\mathcal{M}^N = \{U\in V^N: U^TU = I_N\}$$ with $N\ge1$.

We see that the line search method is the most direct way to solve \eqref{general-form}
and has been widely investigated. In particular, the line search method
has been applied to orthogonality
constrained problems (see, e.g., the gradient type method
\cite{JD,SRNB,WY,ZZWZ}, the conjugate gradient(CG) method \cite{DLZZ,EAS},
and the Newton type method \cite{DZZ,EAS,HMWY,ZBJ}). We refer to \cite{AbMaSe, Smith94} for
the constrained line search method on an abstract manifold and \cite{AbBaGa, GLY, YMW}  for some other methods apart from line search methods such as trust-region methods and a parallelizable infeasible methods for manifold constrained optimazation.

We understand that the step size strategy plays a
crucial rule in a line search method.  Since the computational cost of the exact line search is usually unaffordable,
the ``Armijo backtracking" approach proposed in \cite{LA} is performed as
an alternative way that leads to some monotone algorithms for orthogonality constrained
problems\cite{AbMaSe,DLZZ,ZBJ}. The non-monotone step size strategies based on the similar
``Armijo-type backtracking" approaches are presented in order to accelerate the line search methods
\cite{DZ,DYH,ZH}. The effectiveness of the non-monotone step sizes remains well when applied
to minimization problems with orthogonality constraints \cite{JD,ZZWZ}.
To our knowledge, most of the existing line search algorithms for solving manifold
constrained problems require the backtracking skill to ensure the convergence.
However, these backtracking-based step size strategies need to compute the trial points
and their corresponding function values 
repeatedly if they do not meet the Armijo-type condition.
During this procedure, not only are the times of backtracking unpredictable(which usually
means that the final step size is unassessable), much computational cost is also needed, 
especially for the orthogonality constrained problem, where the orthogonalization procedure
is required.

To reduce the computational cost in finding reasonable step sizes, in this paper, we propose and
analyze an adaptive step size strategy for a class of
line search methods for orthogonality constrained minimization problems.
It is shown by theory and numerics that we are able to avoid the classic ``backtracking" approach in the line search method without losing the convergence.
As an application, we apply our adaptive strategy to solve the Kohn-Sham energy minimization problem,  which is a significant and challenging scientific model, for several
typical systems. The numerical results show that our approach indeed outperforms the backtracking-based step size
strategies in both number of iterations and computational time.

The rest of this paper is organized as follows: in Section \ref{sec-pre}, we provide a brief
introduction to the orthogonal constrained minimization problems and some notation that
will be used in this paper. We set up an uniform framework for a class of line search methods for orthogonality constraints minimization problems and review the classic ``backtracking-based" step size strategy before we study the adaptive step size strategy.
In Section \ref{sec-asss}, we propose our adaptive step size strategy and prove the convergence
of the corresponding line search methods. We report several numerical experiments on electronic
structure calculations in Section \ref{sec-num} to show the effectiveness and advantages of our 
strategy. Finally, we give some concluding remarks in Section \ref{sec-cln}, provide the
proof and remarks to Theorem \ref{btbb} in Appendix \ref{A1}, and some numerical results obtained by the gradient
type method with different initial step size choices and different parameter in Appendix \ref{A2}.

\section{Preliminary}\label{sec-pre}
\subsection{Setting} \label{subsec-setting}
Let $U = (u_1, \dots, u_N), \ W = (w_1, \dots, w_N)\in V^N$, where $V$ is some Hilbert space
equipped with the inner product $\langle\cdot, \cdot\rangle_V$. Denote
$U^TW = (\langle u_i, w_j\rangle_V)_{i,j=1}^N$ the inner product matrix of $U$ and $W$.
We deduce a inner product in $V^N$ as $\langle U, W\rangle_{V^N}=\text{tr}(U^TW)$ and
define the induced norm of $V^N$ by $\|U\|_{V^N}=\sqrt{\langle U, U\rangle_{V^N}}$.

Consider minimization problem:
\begin{equation}\label{dis-emin}
\begin{split}
&\inf_{U\in V^N} \ \ \ E(U) \\
& s.t.\ \   U^TU = I_N,
\end{split}
\end{equation}
where $I_N$ is the identity matrix of order $N$. The feasible set of \eqref{dis-emin} is a Stiefel manifold which is defined as
\begin{equation}
\mathcal{M}^N = \{U\in V^N: U^TU = I_N\}.
\end{equation}

In this paper, we mainly focus on the objective functional that is orthogonal invariant, namely,
\begin{eqnarray}\label{orthogonal-invariant}
E(U)=E(UP), ~~\forall U\in \mathcal{M}^N,P\in \mathcal{O}^N,
\end{eqnarray}
where $\mathcal{O}^N$ is the set of all orthogonal matrix of order $N$. 
We should point out that for $U\in V^N, \ P\in\mathbb{R}^{N\times N} $, product $UP\in V^N$ can be viewed as the vector-matrix product since $U = (u_1, u_2, \cdots, u_n)$ with $u_i\in V$ can be viewed as a $1\times N$ vector. Under the orthogonal invariant setting \eqref{orthogonal-invariant}, we may consider \eqref{dis-emin} on a Grassmann manifold $\mathcal{G}^N$ which is the quotient manifold of $\mathcal{M}^N$:
\begin{equation*}
\mathcal{G}^N = \mathcal{M}^N/\sim.
\end{equation*}
Here, $\sim$ denotes the equivalence relation which is defined as: $\hat{U} \sim U$,  if and only if there exists $P \in \mathcal{O}^N$, such that $\hat{U} = UP$. For any $U \in \mathcal{M}^N$, we denote
\begin{equation*}
[U] = \{U P: P \in  \mathcal{O}^N\},
\end{equation*}
and Grassmann manifold $\mathcal{G}^N$ is then formulated as
\begin{equation*}
\mathcal{G}^N= \{[U]: U\in\mathcal{M}^N\}.
\end{equation*}
In addition, we assume that \eqref{dis-emin} achieves its minimum in $\mathcal{G}^N$, which implies that
\eqref{dis-emin} is equivalent to
\begin{equation}\label{Emin-Grass}
\min_{[U] \in \mathcal{G}^N} \ \ \ E(U).
\end{equation}
For $[U] \in \mathcal{G}^N$, the tangent space of $[U]$ on $\mathcal{G}^N$ is the following set \cite{EAS}
\begin{equation}\label{tan}
\mathcal{T}_{[U]}\mathcal{G}^N = \{W\in V^N:
W^TU   = {\bf 0} \in \mathbb{R}^{N\times N}\}.
\end{equation}
The union of all tangent spaces is called the tangent bundle, which is denoted by
\begin{equation*}
\mathcal{T}\mathcal{G}^N = \bigcup_{[U]\in\mathcal{G}^N}\mathcal{T}_{[U]}\mathcal{G}^N.
\end{equation*}
Further, the gradient $\nabla_G E(U)$ at
$[U]$ on $\mathcal{G}^N$ is \cite{EAS}
\begin{equation*}
    \nabla_G E(U)  = \nabla E(U) - U\big(U^T\nabla E(U)\big) = (I - UU^T)\nabla E(U),
\end{equation*}
where $\nabla E(U)$ is the classic gradient of $E$ at point $U$ and $I$ is the identity in $V^N$. Note that $\nabla_G E(U)\in\mathcal{T}_{[U]}\mathcal{G}^N$.
\subsection{Orthogonality constrained line search method}\label{subsec-lsm}
For solving \eqref{Emin-Grass}, a direct approach is to use the so called line search method, such as gradient type method, Newton method, and CG method.
In this part, we set up an uniform framework for a class of line search methods with orthogonality constraints.

Suppose $U\in\mathcal{M}^N$ is our current iteration point. There are two main issues in a line search method, the search direction $D\in\mathcal{T}_{[U]}\mathcal{G}^N$ and the step size $t\in\mathbb{R}$. After these two issues are handled, we need to apply an orthogonality preserving operator in a feasible method to ensure that the next iteration point is still in the feasible set. To this end, the so called ``retraction" is used \cite{AbMaSe}.
\renewcommand{\thefootnote}{\arabic{footnote}}
\setcounter{footnote}{0}

Given an operator $\kappa:\mathcal{T}_U\mathcal{M} \to \mathcal{M}$, we denote its derivative by $\textup{d}\kappa(\hat{D}):\mathcal{T}_{\hat{D}}\mathcal{T}_{U}\mathcal{M}\to\mathcal{T}_{U}\mathcal{M}$\footnote[1]{For any $\hat{D}\in\mathcal{T}_{U}\mathcal{M}$, the linear space $\mathcal{T}_{\hat{D}}\mathcal{T}_U\mathcal{M}$ is isomorphic to $\mathcal{T}_U\mathcal{M}$. Hence, $\textup{d}\kappa$ can be viewed as a mapping within $\mathcal{T}_{U}\mathcal{M}$.}, which satisfies
\begin{equation}\label{differential}
\lim_{\|\delta D\|\to 0}\frac{\|\kappa(\hat{D}+\delta D) - \kappa(\hat{D}) - \textup{d}\kappa(\hat{D})[\delta D]\|}{\|\delta D\|}=0.
\end{equation}
The ``retraction" is then defined as follows \cite{AbMaSe}.

\begin{definition}
A retraction $\mathcal{R}: \ \mathcal{T}\mathcal{M} \to \mathcal{M}$ on a manifold $\mathcal{M}$ is a smooth mapping satisfying
\begin{eqnarray*}
\mathcal{R}_{U}({\bf 0}) &=& U, \\
\textup{d}\mathcal{R}_{U}({\bf 0}) &=& \textup{Id}_{\mathcal{T}_{U}\mathcal{M}},
\end{eqnarray*}
where $\mathcal{R}_{U}$ is the restriction of $\mathcal{R}$ to $\mathcal{T}_{U}\mathcal{M}$ when $U\in\mathcal{M}$, $\bf 0$ denotes the zero element in $\mathcal{T}_{U}\mathcal{M}$, and $\textup{Id}_{\mathcal{T}_{U}\mathcal{M}}$ is the identity mapping on $\mathcal{T}_{U}\mathcal{M}$.
\end{definition}

In our discussion, for simplicity, we introduce a macro $$\text{ortho}(U,D,t) = \mathcal{R}_{[U]}(tD),$$ for $U\in\mathcal{M}^N$ and $D\in\mathcal{T}_{[U]}\mathcal{G}^N$, which is a smooth curve on $\mathcal{M}^N$ starting from $U$ and along the initial direction $D$ when considered as an operator with respect to $t$. More precisely, the smooth mapping $\text{ortho}(U,D,t)$ satisfies that
\begin{eqnarray}
\text{ortho}(U,D,0)=U, \label{retraction1}\\
\frac{\partial}{\partial t}\text{ortho}(U,D,0)=D. \label{retraction2}
\end{eqnarray}
Moreover, if \eqref{retraction1} and \eqref{retraction2} hold true for all $U\in\mathcal{M}^N$ and $D\in\mathcal{T}_{[U]}\mathcal{G}^N$, then the corresponding $\mathcal{R}$ is indeed a retraction \cite{AbMaSe}.

Taking $\hat{D}={\bf 0}$ in \eqref{differential}, we have that
\begin{equation*}
\lim_{\|\delta D\|_{V^N}\to0}\frac{\|\textup{ortho}(U,\delta D,1)-\textup{ortho}(U,\delta D,0)-\frac{\partial}{\partial t}\text{ortho}(U,\delta D,0)\|_{V^N}}{\|\delta D\|_{V^N}}=0
\end{equation*}
for any retraction $\text{ortho}$.
By using \eqref{retraction1} and \eqref{retraction2} and rewriting $\delta D = tD$, we obtain
\begin{equation*}
\lim_{t\|D\|_{V^N}\to0}\frac{\|\textup{ortho}(U,D,t)-U-tD\|_{V^N}}{t\|D\|_{V^N}}=0,
\end{equation*}
which indicates that
\begin{equation}\label{orthoerror}
\|\textup{ortho}(U,D,t)-U-tD\|_{V^N} = o(t\|D\|_{V^N}).
\end{equation}

Now we state an abstract line search method for an orthogonality constrained problem.
\begin{algorithm}\label{linesearch}
\caption{{\bf Line search method with orthogonality constraints}}
  Given the tolerance $\epsilon \in (0,1)$, the initial guess $U_0, \ s.t. \   U_0^TU_0  = I_N$, compute $\nabla_G E(U_0)$, and set $n = 0$;

 \While {$\|\nabla_G E(U_n)\|_F> \epsilon$}{
  Determine $D_n\in\mathcal{T}_{[U_n]}\mathcal{G}^N$ by a certain strategy;

  Find a suitable $t_n$;

  Update $$U_{n+1}=\text{ortho}(U_n,D_n,t_n);$$

  set $n=n+1$ and compute $\nabla_G E(U_n)$;
 }
\end{algorithm}

In order to ensure the convergence of Algorithm \ref{linesearch}, we should impose some restrictions on search directions $\{D_n\}_{n=0}^\infty$ and step sizes $\{t_n\}_{n=0}^\infty$.

For the search directions, we always demand that all $D_n$ are descent directions so that we may expect some function value reduction at each iteration. More precisely, we require
\begin{equation}\label{decrease-direction}
\langle\nabla_G E(U_n), D_n\rangle_{V^N}<0,\   n=0,1,2,\dots
\end{equation}
Meanwhile, it is undesirable to see that the search directions are almost orthogonal to the gradient directions, i.e. coincident to the contours, since the objection function value is nearly invariant through these directions. As a result, it is reasonable to restrict that
\begin{equation}\label{not-ortho}
\limsup_{n\to\infty}\frac{-\langle\nabla_G E(U_n), D_n\rangle_{V^N}}{\|\nabla_G E(U_n)\|_{V^N}^a}\ne 0
\end{equation}
for some $a>0$.

\begin{remark}
Such a direction $D_n$ is always attainable as long as $\nabla_G E(U_n)\ne0$. If there exists some $n$, of which $D_n$ does not satisfy \eqref{decrease-direction} or \eqref{not-ortho} when carry out Algorithm \ref{linesearch}, then one can reset $D_n = -\nabla_G E(U_n)$ to satisfy \eqref{not-ortho} for $a = 2$ and continue.
\end{remark}

To choose a suitable step size, we define
$$\phi_n(t) = E(\text{ortho}(U_n,D_n,t)), t\ge0,$$
and see that there exists a global minimizer $t_n^\ast$ such that $$\phi_n(t)\ge\phi_n(t_n^\ast), \forall t\ge0$$ provided $\phi_n(t)$ is bounded below. Theoretically, $t_n^\ast$ would be the optimal choice for the step size. However, it usually costs too much or even impossible to get the exact $t_n^\ast$.  Therefore, some inexact line search conditions are investigated.

One of the most famous conditions imposed to the step sizes is the following Armijo condition which has been studied and applied in a number of works (see, e.g., \cite{AbMaSe,DLZZ,HMWY} and references cited therein). By the Armijo condition, the step size $t_n$ is chosen to satisfy
\begin{equation}\label{Armijo}
E(\textup{ortho}(U_n,D_n,t_n))-E(U_n)\leq\eta t_n\langle\nabla_G E(U_n), D_n\rangle_{V^N},
\end{equation}
where $\eta\in(0,1)$ is a given parameter. We see from $\langle\nabla_G E(U_n), D_n\rangle_{V^N}<0$ that the objective function decreases monotonely during the iterations. In this case, a line search method is said to be a monotone line search method.

The monotone condition \eqref{Armijo} seems too strict in some cases. Instead, Zhang et al. \cite{ZH} introduced the following non-monotone condition that the step sizes $t_n$ satisfy
\begin{align}\label{back-cond1}
  E(\textup{ortho}(U_n,D_n,t_n))-\mathcal{C}_n \leq   \eta t_n\langle \nabla_GE(U_n),D_n\rangle_{V^N},  n=0, 1, 2, \dots
\end{align}
Here,
\begin{equation}\label{Cn}
\begin{cases}
\mathcal{C}_0=E(U_0), Q_0=1,  \\
Q_n=\alpha Q_{n-1}+1, \\
\mathcal{C}_n=\big(\alpha Q_{n-1}\mathcal{C}_{n-1}+E(U_{n})\big)/Q_n,
\end{cases}
\end{equation}
with $\alpha\in[0,1)$, a given parameter.
\begin{remark}
Note that
\begin{equation}\label{non-mono}
\mathcal{C}_n-E(U_n) = \frac{Q_n-1}{Q_n}(\mathcal{C}_{n-1}-E(U_n))\ge0,
\end{equation}
the value of the objective function does not necessarily decrease, which is the reason why \eqref{back-cond1} is called a ``non-monotone" condition.
\end{remark}

Since Armijo condition \eqref{Armijo} is simply a special case of \eqref{back-cond1} by taking $\alpha = 0$, we always consider \eqref{back-cond1} in the rest of this paper. We observe that for $t_n$ small enough, \eqref{back-cond1} will be always satisfied since
$$E(\textup{ortho}(U_n,D_n,t_n))-\mathcal{C}_n \leq E(\textup{ortho}(U_n,D_n,t_n))-E(U_n) \leq\eta t_n\langle \nabla_GE(U_n),D_n\rangle_{V^N}$$
for sufficient small $t_n$. To avoid an extreme small step size, which may cause the slow convergence of the algorithm, we require
\begin{equation}\label{boundbelow}
\liminf_{n\to\infty}t_n>0.
\end{equation}

The following theorem shows that Algorithm \ref{linesearch} with such search directions and step sizes terminates in finite steps and returns a stationary point.
\begin{theorem}\label{l1converge}
Suppose the sequence  $\{U_n\}_{n=0}^\infty$ is generated by
Algorithm \ref{linesearch}. If $\{D_n\}_{n=0}^\infty, \{t_n\}_{n=0}^\infty$ are chosen to satisfy \eqref{decrease-direction}, \eqref{not-ortho} and \eqref{back-cond1}, \eqref{boundbelow}
respectively, then either $\|\nabla_G E(U_n)\|_{V^N}=0$ for some positive integer $n$ or
\begin{equation}\label{conc-der}
\liminf_{n\to\infty} \| \nabla_G E(U_n)\|_{V^N}  = 0.
\end{equation}
\end{theorem}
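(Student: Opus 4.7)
The plan is to mimic the Zhang--Hager style argument for non-monotone line search methods, treating the averaged quantity $\mathcal{C}_n$ as a surrogate merit function and showing that it drives the search directions to zero, which in turn forces the gradient to vanish along a subsequence. Assume for contradiction that $\|\nabla_G E(U_n)\|_{V^N} > 0$ for every $n$; I will then derive \eqref{conc-der}.

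The first step is to derive a clean recursion for $\mathcal{C}_n$. Using $Q_{n+1} = \alpha Q_n + 1$ in the definition \eqref{Cn}, a direct manipulation gives
\begin{equation*}
\mathcal{C}_{n+1} - \mathcal{C}_n = \frac{E(U_{n+1}) - \mathcal{C}_n}{Q_{n+1}}.
\end{equation*}
Combined with the non-monotone acceptance condition \eqref{back-cond1} and the descent property \eqref{decrease-direction}, this yields
\begin{equation*}
\mathcal{C}_{n+1} - \mathcal{C}_n \le \frac{\eta\, t_n\, \langle\nabla_G E(U_n), D_n\rangle_{V^N}}{Q_{n+1}} \le 0,
\end{equation*}
so $\{\mathcal{C}_n\}$ is non-increasing. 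Since $\mathcal{C}_n$ is a convex combination of $E(U_0),\dots,E(U_n)$ (a fact one checks by induction on \eqref{Cn}), the assumption that $E$ is bounded below on $\mathcal{M}^N$ implies $\mathcal{C}_n$ is bounded below. Telescoping and using $Q_{n+1} \le 1/(1-\alpha)$ then gives the summability
\begin{equation*}
\sum_{n=0}^{\infty} \bigl(-t_n\, \langle \nabla_G E(U_n), D_n\rangle_{V^N}\bigr) < \infty,
\end{equation*}
which forces $t_n\, \langle\nabla_G E(U_n), D_n\rangle_{V^N}\to 0$.

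The remaining step is to convert this to convergence of the gradient. By \eqref{boundbelow}, $\liminf_{n\to\infty} t_n > 0$, so there is a constant $T>0$ with $t_n \ge T$ for all sufficiently large $n$. Hence $\langle \nabla_G E(U_n), D_n\rangle_{V^N} \to 0$. Now apply \eqref{not-ortho}: there exists a subsequence $\{n_k\}$ along which
\begin{equation*}
\frac{-\langle \nabla_G E(U_{n_k}), D_{n_k}\rangle_{V^N}}{\|\nabla_G E(U_{n_k})\|_{V^N}^{\,a}} \longrightarrow L > 0.
\end{equation*}
Since the numerator tends to $0$, so must $\|\nabla_G E(U_{n_k})\|_{V^N}^{\,a}$, and therefore $\|\nabla_G E(U_{n_k})\|_{V^N}\to 0$, establishing \eqref{conc-der}.

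The routine parts are the identity for $\mathcal{C}_{n+1}-\mathcal{C}_n$ and the telescoping. The mildly delicate step I expect to be the main obstacle is justifying that $\{\mathcal{C}_n\}$ is bounded below; this requires the convex-combination representation of $\mathcal{C}_n$ (an induction on \eqref{Cn}) together with an implicit lower-boundedness of $E$ on $\mathcal{M}^N$, which should be noted explicitly as a standing assumption in the hypotheses. The rest of the argument is entirely structural and does not rely on the specific form of the retraction or on the orthogonal-invariance \eqref{orthogonal-invariant}; the estimate \eqref{orthoerror} is not needed here, since it belongs to the subsequent analysis of the adaptive step size rule itself.
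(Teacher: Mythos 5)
Your proof is correct and follows essentially the same route as the paper's: both telescope the decrease forced by \eqref{back-cond1} (you via the monotone surrogate $\mathcal{C}_n$, the paper via $E(U_n)$ directly — equivalent in view of $\mathcal{C}_{n+1}-\mathcal{C}_n=(E(U_{n+1})-\mathcal{C}_n)/Q_{n+1}$) to obtain summability of $-t_n\langle\nabla_G E(U_n),D_n\rangle_{V^N}$, and then combine \eqref{boundbelow} with \eqref{not-ortho} to extract a subsequence of vanishing gradients. Your remark that lower-boundedness of $E$ on $\mathcal{M}^N$ is an implicit hypothesis is accurate; the paper uses it too (its telescoping needs $E(U_0)-\lim_i E(U_i)$ finite) and covers it by the standing assumption that \eqref{dis-emin} attains its minimum.
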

\begin{proof}
Suppose $\|\nabla_G E(U_n)\|_{V^N}=0$ for some positive integer $n$, the conclusion is trivial. Assume that $$\|\nabla_G E(U_n)\|_{V^N}\neq0,  n=0,1,2,\dots$$

We obtain by the definition of $\mathcal{C}_n$ that
\begin{eqnarray*}
E(U_{n+1})-E(U_n) &=& E(U_{n+1})-\mathcal{C}_n+\mathcal{C}_n-E(U_n) \\
&=& E(U_{n+1})-\mathcal{C}_n - \frac{\alpha Q_{n-1}}{Q_n}(E(U_n)-\mathcal{C}_{n-1}), \ n=1,2,\dots
\end{eqnarray*}
and
\begin{equation*}
E(U_1)-E(U_0) = E(U_1)-\mathcal{C}_0.
\end{equation*}
Summing up all $n\in\mathbb{N}$ gives that
\begin{eqnarray*}
\lim_{i\to\infty}\sum_{n=0}^{i}\big(E(U_n)-E(U_{n+1})\big) &=& E(U_0)-\lim_{i\to\infty}E(U_i) \\
&=&-\sum_{n=0}^{\infty}\frac{1}{Q_{n+1}}(E(U_{n+1})-\mathcal{C}_n)\\
&\geq&-\eta\sum_{n=0}^{\infty}\frac{1}{Q_{n+1}}t_n\langle\nabla_G E(U_n), D_n\rangle_{V^N},
\end{eqnarray*}
where \eqref{back-cond1} is used in the last inequality.

Since $\displaystyle Q_n = \sum_{i=0}^n \alpha^i$, we have $Q_n\in[0, \frac{1}{1-\alpha})$. Then, we have from \eqref{decrease-direction} that
$$\sum_{n=0}^{\infty}-t_n\langle\nabla_G E(U_n), D_n\rangle_{V^N}<+\infty.$$
Thus,
$$\lim_{n\to\infty}-t_n\langle\nabla_G E(U_n), D_n\rangle_{V^N}=0,$$
or equivalently,
$$\lim_{n\to\infty}\frac{-\langle\nabla_G E(U_n), D_n\rangle_{V^N}}{\|\nabla_G E(U_n)\|_{V^N}^a}t_n\|\nabla_G E(U_n)\|_{V^N}^a=0.$$
By \eqref{not-ortho}, we have
$$\liminf_{n\to\infty} t_n\| \nabla_G E(U_n)\|_{V^N}^a  = 0,$$
i.e., there exists an subsequence $\{U_{n_j}\}_{j=0}^\infty$, such that
$$\lim_{j\to\infty} t_{n_j}\| \nabla_G E(U_{n_j})\|_{V^N}^a  = 0.$$
Note that $$\liminf_{j\to\infty} t_{n_j}\ne0,$$
we obtain
$$\lim_{j\to\infty} \| \nabla_G E(U_{n_j})\|_{V^N}^a  = 0,$$ which implies that
$$\liminf_{n\to\infty} \| \nabla_G E(U_{n})\|_{V^N}  = 0.$$
\end{proof}

\begin{remark}
We claim that \eqref{boundbelow} is not necessarily required.
In fact, \eqref{not-ortho} indicates that there exists an subsequence $\{n_j\}_{j=0}^\infty$, such that
$$\lim_{j\to\infty}\frac{-\langle\nabla_G E(U_{n_j}), D_{n_j}\rangle_{V^N}}{\|\nabla_G E(U_{n_j})\|_{V^N}^a}\ne 0.$$
We see from the proof of Theorem \ref{l1converge} that  \eqref{boundbelow} can be replaced by: for the subsequence $\{n_j\}_{j=0}^\infty$, there holds
\begin{equation}\label{sumunbound}
\sum_{j=0}^{\infty}t_{n_j}=\infty.
\end{equation}
It is worth mentioning that \eqref{boundbelow} typically leads to \eqref{sumunbound} and \eqref{sumunbound} does not demand the step sizes $\{t_n\}_{n=0}^\infty$ to be bounded from below.
\end{remark}

Here, we review the classic  ``backtracking" approach to get the suitable step sizes that satisfy the mentioned conditions and analyze the convergence of the line search algorithm equipped with suitable search directions and the ``backtracked" step sizes.

The following algorithm is indeed the so called ``Armijo-type backtracking" method \cite{LA,ZH}:

\begin{algorithm}\label{backtracking}
\caption{{\bf Backtracked step size strategy}($U, D, t^{\textup{initial}}, t_{\textup{min}}, \eta, k, \mathcal {C})$}

  Set $t=\max{(t^{\text{initial}}, t_{\textup{min}})}$;

  \While {$E(\textup{ortho}(U,D,t))-\mathcal{C} >   \eta t\langle \nabla_GE(U),D\rangle_{V^N}$}{
    $t = kt$;
  }

    Return $t$.
\end{algorithm}

Here and hereafter, $U\in\mathcal{M}^N$ is a feasible iteration point, $D\in\mathcal{T}_{[U]}G^N$ denotes the search direction, $t^{\textup{initial}}$ is the initial guess of the step size, $t_{\textup{min}}$ is an extreme small positive constant to prevent the step size to be zero in programming and $\eta,\ k$ are some given parameters. We understand that the choice of initial step size is strongly related to the search direction. For instance, the possible initial guess can be chosen as the Barzilai-Borwein(BB) step size for gradient methods \cite{DZ,DYH,WY,ZZWZ}, the so called ``Hessian based step size" for CG method \cite{DLZZ} and the constant step size one for Newton methods \cite{DZZ,HMWY,Smith94,ZBJ}. A line search method equipped with the backtracked step size strategy Algorithm \ref{backtracking} reads as follows:

\begin{algorithm}\label{linesearch2}
\caption{{\bf Backtracking-based line search method}}
 Given $\epsilon, \eta, \alpha, k\in(0,1), \ t_{\textup{min}}>0,$ the initial value $U_0, \ s.t. \   U_0^TU_0  = I_N$, compute $\nabla_G E(U_0)$, and set $n = 0$;

 \While {$\|\nabla_G E(U_n)\|_F> \epsilon$}{

  Compute $\mathcal{C}_n$ by \eqref{Cn};

  Determine $D_n\in\mathcal{T}_{[U_n]}\mathcal{G}^N$ by a certain strategy;

  Given the initial guess of the step size $t_n^{\text{initial}}$ by a certain strategy;

  Compute $$t_n =  \textup{{\bf Backtracked step size strategy}}(U_n, D_n,t_n^{\text{initial}}, t_{\textup{min}}, \eta, k, \mathcal{C}_n);$$

  Update $$U_{n+1}=\text{ortho}(U_n,D_n,t_n);$$

  Set $n=n+1$ and compute $\nabla_G E(U_n)$;
 }
\end{algorithm}

We need to impose the following assumption on the search directions $\{D_n\}_{n=0}^\infty$ to establish the convergence result.

\begin{assume}\label{Dbound}
For the subsequence $\{n_j\}_{j=0}^\infty$ that satisfies
$$\lim_{j\to\infty}-\frac{\langle \nabla_G E(U_{n_j}), D_{n_j}\rangle_{V^N}}{\|\nabla_G E(U_{n_j})\|_{V^N}^a}=\delta>0,$$
there exists a constant $C>0$ such that
\begin{equation}
\|D_{n_j}\|_{V^N}\le C, \  j=0,1,2,\dots
\end{equation}
\end{assume}

We now show that such $\{t_n\}_{n=0}^\infty$ obtained by Algorithm \ref{backtracking} leads to a line search method that is convergent.

\begin{theorem}\label{btbb}
Suppose the sequence $\{U_n\}_{n=0}^\infty$ is generated by Algorithm \ref{linesearch2}, $\{D_n\}_{n=0}^\infty$ is the set of corresponding search directions satisfying \eqref{decrease-direction}, \eqref{not-ortho} and Assumption \ref{Dbound}, then either $\|\nabla_G E(U_n)\|_{V^N}=0$ for some positive integer $n$ or
\begin{equation*}
\liminf_{n\to\infty} \| \nabla_G E(U_n)\|_{V^N}  = 0.
\end{equation*}
\end{theorem}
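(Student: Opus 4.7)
The plan is to reduce the statement to Theorem \ref{l1converge}. The while loop in Algorithm \ref{backtracking} enforces the non-monotone condition \eqref{back-cond1} automatically, so the only missing ingredient is a quantitative lower bound on $\{t_n\}$. I would argue by contradiction: assume $\liminf_n \|\nabla_G E(U_n)\|_{V^N} > 0$, and combine such a lower bound with \eqref{not-ortho} to contradict the conclusion $\lim_n t_n\langle \nabla_G E(U_n), D_n\rangle_{V^N} = 0$ that is already obtained in the proof of Theorem \ref{l1converge}.

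The central technical step is a one-step ``descent lemma'' along the retraction. Under standard smoothness hypotheses on $E$ (boundedness of the iterates together with Lipschitz continuity of $\nabla E$) and using the retraction estimate \eqref{orthoerror}, one obtains a quadratic upper bound of the form
\begin{equation*}
E(\textup{ortho}(U_n, D_n, t)) \le E(U_n) + t\,\langle \nabla_G E(U_n), D_n\rangle_{V^N} + \tfrac{M}{2}\,t^2 \|D_n\|_{V^N}^2
\end{equation*}
for a uniform constant $M$. Combined with $\mathcal{C}_n \ge E(U_n)$ from \eqref{non-mono}, this shows that the Armijo-type test in Algorithm \ref{backtracking} is satisfied whenever
\begin{equation*}
t \le t^*_n \;:=\; \frac{2(1-\eta)\bigl(-\langle \nabla_G E(U_n), D_n\rangle_{V^N}\bigr)}{M\,\|D_n\|_{V^N}^2}.
\end{equation*}
Hence the while loop terminates in finitely many iterations, and the returned step size obeys $t_n \ge \min\{t_{\textup{min}},\, k\,t^*_n\}$: either no backtracking was needed (so $t_n \ge t_{\textup{min}}$), or the last failing trial value $t_n/k$ must have exceeded the sufficient threshold $t^*_n$, forcing $t_n \ge k\,t^*_n$.

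Restricting to the subsequence $\{n_j\}$ coming from \eqref{not-ortho} and invoking Assumption \ref{Dbound}, this becomes $t_{n_j} \ge \min\{t_{\textup{min}},\, c\,\|\nabla_G E(U_{n_j})\|_{V^N}^a\}$ for a constant $c > 0$. The telescoping/bounded-$Q_n$ calculation used in the proof of Theorem \ref{l1converge} applies verbatim and yields $t_{n_j}\|\nabla_G E(U_{n_j})\|_{V^N}^a \to 0$. If $\liminf_n \|\nabla_G E(U_n)\|_{V^N}$ were positive, then $\|\nabla_G E(U_{n_j})\|_{V^N}$ would stay uniformly bounded below, and the displayed lower bound on $t_{n_j}$ would force $t_{n_j}\|\nabla_G E(U_{n_j})\|_{V^N}^a$ to remain bounded below as well, a contradiction. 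The main obstacle is securing the descent lemma with a constant $M$ that is genuinely uniform in $n$; this rests on an implicit regularity hypothesis (boundedness of the iterates together with $C^2$-control of $E$ and of $\textup{ortho}$), which I would record as a working assumption at the start of the proof.
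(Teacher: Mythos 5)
Your argument is internally coherent and would prove the statement, but it follows a genuinely different route from the paper's, and the difference matters: your central tool is the quantitative descent lemma $E(\textup{ortho}(U_n,D_n,t)) \le E(U_n) + t\langle\nabla_G E(U_n),D_n\rangle_{V^N} + \tfrac{M}{2}t^2\|D_n\|_{V^N}^2$ with a uniform $M$, which requires Lipschitz continuity of $\nabla E$ together with second-order (not merely first-order $o(t\|D\|_{V^N})$ as in \eqref{orthoerror}) control of the retraction. Neither is among the hypotheses of the theorem, and you correctly flag that you would have to add them. The paper's proof is built precisely to avoid this: it argues by contradiction from $\liminf_n t_n = 0$, notes that the last rejected trial step $t_{n_s}/k$ violates the Armijo-type test, applies the mean value theorem to $\phi_{n_s}(t)=E(\textup{ortho}(U_{n_s},D_{n_s},t))$ to produce $\xi_{n_s}\in(0,t_{n_s}/k)$ with $\phi'_{n_s}(\xi_{n_s})>\eta\langle\nabla_G E(U_{n_s}),D_{n_s}\rangle_{V^N}$, and then uses compactness of $\mathcal{M}^N$ and Assumption \ref{Dbound} to pass to convergent subsequences $U_{n_s}\to\tilde U$, $D_{n_s}\to\tilde D$; continuity alone then gives $\langle\nabla_G E(\tilde U),\tilde D\rangle_{V^N}\ge\eta\langle\nabla_G E(\tilde U),\tilde D\rangle_{V^N}$, hence $\langle\nabla_G E(\tilde U),\tilde D\rangle_{V^N}=0$ by \eqref{decrease-direction}, and \eqref{not-ortho} forces $\|\nabla_G E(U_{n_s})\|_{V^N}\to 0$. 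What each approach buys: yours yields an explicit lower bound $t_n\ge\min\{t_{\textup{min}},\,k\,t_n^{*}\}$ on the accepted step sizes, which is more constructive; the paper's compactness argument yields the same convergence under weaker regularity, and indeed the remarks in Appendix \ref{A1} advertise exactly this relaxation (dropping the Lipschitz continuity of $\nabla E$ assumed in \cite{ZZWZ}) as a contribution. So as a proof of the theorem exactly as stated, your version proves a slightly weaker result; if you want to keep your route, you must either add the Lipschitz/second-order-retraction hypothesis explicitly or replace the descent lemma by the compactness-plus-mean-value argument.
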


By applying Theorem \ref{btbb} to some existing orthogonality constrained line search methods, we can loosen the convergence conditions therein. More details and the proof of Theorem \ref{btbb} are referred to Appendix \ref{A1}.

We see from Theorem \ref{btbb} that Algorithm \ref{backtracking} can generate a sequence of step sizes $\{t_n\}_{n=0}^\infty$, which together with suitable search directions $\{D_n\}_{n=0}^\infty$ leads to a converged line search method. However, we need to carry out $$U_{n+1}(t)=\textup{ortho}(U_n,D_n,t)$$ and the corresponding objective function value $E(U_{n+1}(t))$ once a backtracking step in Algorithm \ref{backtracking}, which occupy the main part of computations at each backtracking step. 
It can be predicted that the total cost at an iteration is strongly depends on the times of backtracking since the cost of each backtracking step is nearly the same (c.f. Fig. \ref{f0-1} and Fig. \ref{f0-2}). 
To get rid of this drawback, some new step size strategies which avoid computing $U_{n+1}(t)$ explicitly are of interest.

\section{Adaptive step size strategy}\label{sec-asss}
In this section, we propose and analyze
an adaptive step size strategy for orthogonality constrained line search methods. We will see that our adaptive step size strategy can provide better step sizes $\{t_n\}_{n=0}^\infty$ more efficiently than the Armijo-type backtracking approach.

We should introduce some notation before we propose our adaptive strategy.
Suppose $E(U)$ is of second order differentiable. We denote the second order derivative of $E(U)$ by $\nabla^2 E(U)$. Then we get from \cite{EAS} that
the Hessian of $E(U)$ on the Grassmann manifold is
\begin{equation*}
\nabla_G^2E(U)[D] =
(I-UU^T)\nabla^2E(U)[D] - DU^T\nabla E(U), \forall \ D \in
\mathcal{T}_{[U]}\mathcal{G}^N.
\end{equation*}
We sometimes denote
\begin{eqnarray*}
\langle\nabla_G^2E(U)[D_1], D_2\rangle_{V^N} = \textup{tr}(D_2^T\nabla^2E(U)[D_1]) - \textup{tr}(D_2^TD_1\nabla E(U)),
\end{eqnarray*}
by $\nabla_G^2E(U)[D_1,D_2]$ when $\ D_1,D_2 \in \mathcal{T}_{[U]}\mathcal{G}^N$ \cite{EAS}.

Let $[U], [W] \in \mathcal{G}^N$, with
$U,W\in \mathcal{M}^N$. We obtain from Lemma A.1 in \cite{DLZZ} that there exists a geodesic
\begin{equation}\label{geodesic1}
 \Gamma(t)=[UA\cos{(\Theta t)}A^T+A_2\sin{(\Theta t)}A^T], t\in[0,1],
\end{equation}
such that
\begin{eqnarray*}
\Gamma(0)=[U], \Gamma(1)=[W].
\end{eqnarray*}
Here, $U^TW=A\cos{\Theta}B^T$ and $W-U(U^TW)=A_2\sin{\Theta}B^T$ is the singular value decomposition(SVD) of $U^TW$ and $W-U(U^TW)$ respectively, $$\Theta=\textup{diag}(\theta_1, \theta_2, \dots, \theta_N)$$ is a diagonal matrix with $\theta_i\in[0, \pi/2]$ and $$\sin{(\Theta t)}=\diag(\sin(\theta_1 t), \sin(\theta_2 t), \dots, \sin(\theta_N t))$$ with similar notation for $\cos{(\Theta t)}$. Without loss of generality, we may assume here and hereafter that $\theta_1\ge\theta_2\ge\cdots\ge\theta_n$. Note that $A_2\in \mathcal{M}^N$.

\begin{remark}\label{uni-geo}
For any $U\in\mathcal{M}^N, D\in\mathcal{T}_{[U]}\mathcal{G}^N$, let $D=ASB^T$ be the SVD of $D$ where $A\in\mathcal{T}_{[U]}\mathcal{G}^N$, $S,B\in\mathbb{R}^{N\times N}$, then there exists an unique geodesic
\begin{equation}\label{geodesic3}
 \Gamma(t)=[UB\cos{(S t)}B^T+A\sin{(S t)}B^T],
\end{equation}
which start from $[U]$ and along direction $D$ \cite{EAS}. The above expression \eqref{geodesic1} is just a special case with initial direction $A_2\Theta A^T$.
\end{remark}

More specifically, we use macro $[\textup{exp}_{[U]}(t D)]$
to denote the geodesic on $\mathcal{G}^N$ which starting from $[U]$ and with the initial direction $D\in\mathcal{T}_{[U]}\mathcal{G}^N$. It is easy to check that such geodesic is one of the retractions.
We now define the parallel mapping which maps a tangent vector along the geodesic \cite{EAS}.
\begin{definition}
The parallel mapping $\tau_{\scriptscriptstyle{(U,D,t)}}: \ \mathcal{T}_{[U]}\mathcal{G}^N\to\mathcal{T}_{[\textup{exp}_{[U]}(tD)]}\mathcal{G}^N$ along geodesic $[\textup{exp}_{[U]}(tD)]$ is defined as
\begin{equation*}
\tau_{\scriptscriptstyle{(U,D,t)}}\tilde{D}=\big((-UB\sin{(St)}A^T+A\cos{(St)}A^T+(I_N-AA^T)\big)\tilde{D},
\end{equation*}
where $D=ASB^T$ is the SVD of $D$.
\end{definition}

It can be verified that 
\begin{eqnarray}\label{par_invariant}
\|\tau_{\scriptscriptstyle{(U,D,t)}}\tilde{D}\|_F=\|\tilde{D}\|_F, \forall \tilde{D}\in\mathcal{T}_{[U]}\mathcal{G}^N.
\end{eqnarray}

To show the theory, we introduce two distances on Grassmann manifold $\mathcal{G}^N$: 
\begin{equation}\label{dist}
\begin{split}
&\textup{dist}_{cF}([U], [W]) = \min_{P\in \mathcal{O}^{N\times N}} \| U - WP \|_{V^N}, \\
&\textup{dist}_{geo}([U], [W]) =  \| A_2\Theta A^T \|_{V^N}.
\end{split}
\end{equation}

\begin{remark}
Denote $\|\cdot\|_F$ the Frobenius norm of matrix. It can be calculated that \cite{EAS}
\begin{equation*}
\begin{split}
&\textup{dist}_{cF}([U], [W]) = \|2\sin{\frac{\Theta}{2}}\|_F, \\
&\textup{dist}_{geo}([U], [W]) = \| \Theta \|_F,
\end{split}
\end{equation*}
which indicate that these two kinds of distance are equivalent, namely,
$$\textup{dist}_{cF}([U], [W])\leq\textup{dist}_{geo}([U], [W])\leq2\textup{dist}_{cF}([U], [W]).$$
In addition, we see that
\begin{equation}\label{DeTheta}
\|D\|_{V^N} = \|A_2\Theta A^T\|_{V^N} = \|\Theta\|_F = \textup{dist}_{geo}([U], [W]),
\end{equation}
where D is the initial direction of the geodesic \eqref{geodesic1}.
\end{remark}

To present our adaptive step size strategy and carry out the convergence proof, we need the following conclusion, which can be obtained from Remark 3.2 and Remark 4.2 of \cite{Smith94}.
\begin{proposition}\label{Taylor}
If $E(U)$ is of second order differentiable, then for all $U\in\mathcal{M}^N$, $D\in\mathcal{T}_{[U]}\mathcal{G}^N$, there exists an $\xi\in(0,t)$ such that
\begin{eqnarray}
E(\textup{exp}_{[U]}(tD)) &=& E(U)+t\langle\nabla_GE(\textup{exp}_{[U]}(\xi D)),\tau_{\scriptscriptstyle{(U,D,\xi)}} D\rangle_{V^N} \label{taylor1} \\
&=& E(U)+t\langle\nabla_GE(U),D\rangle_{V^N} \notag\\
&&+ \frac{t^2}{2}\nabla_G^2E(U)[D, D] + o(t^2\|D\|_{V^N}^2) \notag
\end{eqnarray}
and
\begin{eqnarray}
\label{taylor3}
\tau_{\scriptscriptstyle{(U,D,t)}}^{-1}\nabla_GE(\textup{exp}_{[U]}(tD)) &=& \nabla_G E(U)+t\tau_{\scriptscriptstyle{(U,D,\xi)}}^{-1}\nabla_G^2E(\textup{exp}_{[U]}(\xi D))[\tau_{\scriptscriptstyle{(U,D,\xi)}} D].
\end{eqnarray}
\end{proposition}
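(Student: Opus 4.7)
The plan is to reduce both identities to the one-dimensional mean value theorem and Taylor's theorem with Peano remainder by composing with the geodesic $\gamma(s) := \textup{exp}_{[U]}(sD)$. Since $E$ is twice differentiable and $\gamma$ is smooth in $s$, the scalar function $f(s) := E(\gamma(s))$ is $C^2$ on an interval containing $[0,t]$, and the vector-valued map $g(s) := \tau_{(U,D,s)}^{-1}\nabla_G E(\gamma(s))$, which lives in the fixed inner product space $\mathcal{T}_{[U]}\mathcal{G}^N$, is $C^1$ there. A preparatory step is to verify by direct differentiation of \eqref{geodesic3} and comparison with the defining formula for $\tau_{(U,D,s)}$ that the velocity $\dot\gamma(s)$ equals $\tau_{(U,D,s)}D$; in particular $\|\dot\gamma(s)\|_{V^N}$ is constant in $s$ by \eqref{par_invariant}.

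For \eqref{taylor1}, the chain rule gives $f'(s) = \langle \nabla E(\gamma(s)), \dot\gamma(s)\rangle_{V^N}$. Because $\dot\gamma(s)\in\mathcal{T}_{[\gamma(s)]}\mathcal{G}^N$ is orthogonal to $\gamma(s)\gamma(s)^T\nabla E(\gamma(s))$, one may replace $\nabla E$ by $\nabla_G E = (I - \gamma\gamma^T)\nabla E$ without changing the pairing, producing $f'(s) = \langle \nabla_G E(\gamma(s)), \tau_{(U,D,s)}D\rangle_{V^N}$. The scalar MVT $f(t)-f(0) = tf'(\xi)$ then yields the first equality. For the asymptotic expansion I apply $f(t) = f(0) + tf'(0) + \tfrac{t^2}{2}f''(0) + o(t^2)$; the first-order term is immediate since $\tau_{(U,D,0)} = \textup{Id}$, and for $f''(0)$ I differentiate $f'(s)$ once more, using that $\dot\gamma$ is parallel along $\gamma$ (the geodesic equation $\nabla_{\dot\gamma}\dot\gamma = 0$) to kill one of the two Leibniz-rule terms. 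The surviving term is the covariant derivative of $\nabla_G E$ paired with $\dot\gamma$, which by the definition of the intrinsic Hessian equals $\nabla_G^2E(U)[D,D]$. The remainder $o(t^2)$ upgrades to $o(t^2\|D\|_{V^N}^2)$ by homogeneity in $D$.

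For \eqref{taylor3}, I apply the MVT to $g$ inside the fixed Euclidean space $\mathcal{T}_{[U]}\mathcal{G}^N$, componentwise so as to obtain a single $\xi$ after testing against a basis: $g(t) - g(0) = tg'(\xi)$, with $g(0) = \nabla_G E(U)$. The derivative $g'(s)$ is precisely the parallel-transport-back of the covariant derivative of $\nabla_G E$ along $\gamma$, namely $\tau_{(U,D,s)}^{-1}\nabla_G^2 E(\gamma(s))[\dot\gamma(s)] = \tau_{(U,D,s)}^{-1}\nabla_G^2 E(\gamma(s))[\tau_{(U,D,s)}D]$, giving exactly the stated identity.

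The main obstacle is matching the second $s$-derivative of $f$ with the Riemannian Hessian in the paper's coordinate form $\textup{tr}(D^T\nabla^2 E(U)[D]) - \textup{tr}(D^TD\,\nabla E(U))$, because the correction term $-D U^T\nabla E(U)$ is not present in the naive flat Hessian $\nabla^2 E(U)[D,D]$. This correction is precisely what arises from differentiating the projection $(I-\gamma\gamma^T)$ implicit in $\nabla_G E(\gamma(s))$ along the curve, evaluated at $s=0$; a short direct computation using the explicit SVD expression \eqref{geodesic3} and the formula for $\tau_{(U,D,s)}$ reconciles the two. The detailed bookkeeping is exactly what is carried out in Remarks 3.2 and 4.2 of \cite{Smith94}, to which we appeal.
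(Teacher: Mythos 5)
Your overall route---pulling both identities back to one-variable calculus along the geodesic $\gamma(s)=\textup{exp}_{[U]}(sD)$, checking from \eqref{geodesic3} that $\dot\gamma(s)=\tau_{(U,D,s)}D$, and using the geodesic equation to identify $f''(0)$ with $\nabla_G^2E(U)[D,D]$---is precisely the computation underlying Remarks 3.2 and 4.2 of \cite{Smith94}, which is all the paper itself offers by way of proof (the proposition is stated as a consequence of those remarks, with no further argument given). The scalar parts of your write-up are sound: the substitution of $\nabla_GE$ for $\nabla E$ in the pairing with $\dot\gamma$ (since $\dot\gamma(s)^T\gamma(s)=0$), the mean value form of \eqref{taylor1}, the identification of the curvature-of-projection term with the correction $-DU^T\nabla E(U)$ in the coordinate Hessian, and the homogeneity argument upgrading $o(t^2)$ to $o(t^2\|D\|_{V^N}^2)$ (which does require uniformity over directions $D/\|D\|_{V^N}$, available here by compactness of $\mathcal{M}^N$ and of the unit sphere in the tangent space together with continuity of $\nabla^2E$).

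The one genuine gap is your justification of \eqref{taylor3}. The mean value theorem in the form $g(t)-g(0)=t\,g'(\xi)$ with a \emph{single} $\xi$ is false for vector-valued $g$ (take $g(s)=(\cos s,\sin s)$ on $[0,2\pi]$), and ``testing against a basis'' does not repair it: each component produces its own mean value point $\xi_i$, and nothing forces them to coincide. What your argument honestly yields is either the integral form $g(t)-g(0)=t\int_0^1 g'(st)\,\textup{d}s$ or the inequality $\|g(t)-g(0)\|_{V^N}\le t\sup_{0<s<t}\|g'(s)\|_{V^N}$. This defect is inherited from the way the proposition (and Smith's Remark 4.2) is phrased rather than introduced by you, and it is harmless for the paper's purposes: the only use made of \eqref{taylor3} is to deduce the gradient bound \eqref{der-bound} from Assumption \ref{assum_bound}, for which the norm inequality together with the isometry \eqref{par_invariant} suffices. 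If you want a statement with a single mean value point, apply the scalar mean value theorem to $s\mapsto\langle g(s),v\rangle_{V^N}$ for a fixed test vector $v$, or restate \eqref{taylor3} with the integral remainder.
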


We are now able to introduce our adaptive step size strategy. Inspired by the well-known process of adaptive finite element method \cite{CKNS,CDGHZ,DHZ,DXZ}, our adaptive step size can be divided into the following steps:
\begin{center}
Initialize $\to$ {\bf Estimate $\to$ Judge $\to$ Improve}.
\end{center}
We suppose that the initial guess of the step size at the $n$-th iteration $t_n^{\textup{initial}}$ is given.

{\bf Estimate.}
 As mentioned in Section \ref{sec-pre}, the final step size $t_n$ is supposed to satisfy \eqref{Armijo} or \eqref{back-cond1}. However, predicting $E(U_{n+1}(t_n))$ in \eqref{Armijo} or \eqref{back-cond1} need to compute the trail point $U_{n+1}(t_n) = \textup{ortho}(U_n,D_n,t_n)$ and the corresponding functional value, which are usually expensive. Instead, we consider the objective function $E$ around $U_n$ as follows
\begin{equation}\label{approx}
E(U_{n+1}(t)) \approx\  E(U_n) + t \langle \nabla_G E(U_n),D_n\rangle_{V^N} + \frac{t^2}{2}\nabla_G^2E(U_n)[D_n, D_n].
\end{equation}
Replacing the term $E(\textup{ortho}(U_n, D_n, t))$ in \eqref{back-cond1} by the right hand side of \eqref{approx}, we have
\begin{align*}
  E(U_n)+　t \langle \nabla_G E(U_n),D_n\rangle_{V^N} + \frac{t^2}{2}\nabla_G^2E(U_n)[D_n, D_n]-\mathcal{C}_n \leq   \eta t\langle \nabla_GE(U_n),D_n\rangle_{V^N},
\end{align*}
or equivalently,
\begin{align*}
  \frac{E(U_n)+　t \langle \nabla_G E(U_n),D_n\rangle_{V^N} + \frac{t^2}{2}\nabla_G^2E(U_n)[D_n, D_n]-\mathcal{C}_n}{t\langle \nabla_GE(U_n),D_n\rangle_{V^N}} \geq   \eta .
\end{align*}

Hence, we propose the following estimator:
\begin{equation}\label{indicator4}
\zeta_n(t) = \frac{E(U_n)-\mathcal{C}_n+t\langle \nabla_G E(U_n),D_n\rangle_{V^N} + \frac{t^2}{2}\nabla_G^2 E(U_n)[D_n, D_n]}{t\langle \nabla_G E(U_n),D_n\rangle_{V^N}}
\end{equation}
to guide us whether to accept a step size or not at iteration $n$.

To use the estimator \eqref{indicator4}, it is reasonable to restrict $t_n\|D_n\|_{V^N}\leq\vartheta_n$ for some small $\vartheta_n$ since \eqref{approx} remains reliable only in a neighborhood of $U_{n}$. We first set $$t_n = \min{(t_n^{\textup{initial}}, \frac{\vartheta_n}{\|D_n\|_{V^N}})}$$ and then calculate the estimator $\zeta_n(t_n)$.

{\bf Judge.}
The step size $t_n$ is said to be acceptable if $$\zeta_n(t_n)\geq\eta,$$ where $\eta\in(0,1)$ is some given parameter.
Otherwise, $t_n$ is to be improved.

We see from a simple calculation that $t_n>0$ is acceptable if and only if
\begin{equation}\label{stepsizeinterval}
t_n\leq
\begin{cases}
\min{(\frac{[(\eta-1)-\sqrt{\Delta}]\langle \nabla_G E(U_n),D_n\rangle_{V^N}}{\nabla_G^2E(U_n)[D_n, D_n]},\frac{\vartheta_n}{\|D_n\|_{V^N}})}, & \mbox{if $\nabla_G^2E(U_n)[D_n, D_n]>0$,} \\
\frac{\vartheta_n}{\|D_n\|_{V^N}}, & \mbox{otherwise},
\end{cases}
\end{equation}
where $$\Delta = (\eta-1)^2-\frac{2\nabla_G^2E(U_n)[D_n, D_n](E(U_n)-\mathcal{C}_n)}{\langle \nabla_G E(U_n),D_n\rangle_{V^N}^2}.$$

{\bf Improve.}
 If $t_n$ is not acceptable, we choose the step size $t_n$ to be the minimizer of $$E(U_n)+t\langle \nabla_G E(U_n),D_n\rangle_{V^N}+\frac{t^2}{2}\nabla_G^2E(U_n)[D_n, D_n]$$ within the interval given by \eqref{stepsizeinterval}, that is,
\begin{equation}\label{stepsize}
t_n=
\begin{cases}
\min\left(-\frac{\langle \nabla_G E(U_n),D_n\rangle_{V^N}}{\nabla_G^2E
(U_n)[D_n, D_n]}, \frac{\vartheta_n}{\|D_n\|_{V^N}}\right), & \mbox{if
 $\nabla_G^2E(U_n)[D_n, D_n]>0$},\\
\frac{\vartheta_n}{\|D_n\|_{V^N}}, & \mbox{otherwise}.
\end{cases}
\end{equation}

Taking the whole procedure into account, we have our adaptive step size strategy as Algorithm \ref{adaptive}:
\begin{algorithm}\label{adaptive}
\caption{{\bf Adaptive step size strategy}($U, D, t^{\text{initial}}, t_{\textup{min}}, \eta, \theta, \mathcal{C}$)}
  Set $t=\min{(\max{(t^{\text{initial}}, t_{\textup{min}})}, \theta/\|D\|_{V^N})}$;

  Calculate estimator $$\zeta(t) = \frac{E(U)-\mathcal{C}+t\langle \nabla_G E(U),D\rangle_{V^N} + \frac{t^2}{2}\nabla_G^2 E(U)[D, D]}{t\langle \nabla_G E(U),D\rangle_{V^N}};$$

  \If {$\zeta(t)<\eta$}{

  Choose \begin{equation*}
t=
\begin{cases}
\min\left(-\frac{\langle \nabla_G E(U),D\rangle_{V^N}}{\nabla_G^2E
(U)[D, D]}, \frac{\theta}{\|D\|_{V^N}}\right), & \mbox{if
 $\nabla_G^2E(U)[D, D]>0$},\\
\frac{\theta}{\|D\|_{V^N}}, & \mbox{otherwise};
\end{cases}
\end{equation*}
}

Return $t$;
\end{algorithm}

The corresponding adaptive line search method can thus be written as the following Algorithm \ref{linesearch3}:
\begin{algorithm}\label{linesearch3}
\caption{{\bf Adaptive step size line search method}}
 Given $\epsilon, \eta, \alpha\in(0,1), \ t_{\textup{min}}>0,$ the initial value $U_0, \ s.t. \   U_0^TU_0  = I_N$, compute $\nabla_G E(U_0)$, and set $n = 0$;

 \While {$\|\nabla_G E(U_n)\|_F> \epsilon$}{
  Choose a suitable $\vartheta_n$;

  Compute $\mathcal{C}_n$ by \eqref{Cn};

  Determine $D_n\in\mathcal{T}_{[U_n]}\mathcal{G}^N$ by a certain strategy;

  Given the initial guess of the step size $t_n^{\text{initial}}$ by a certain strategy;

  Compute $$t_n = \textup{{\bf Adaptive step size strategy}}(U_n, D_n,t_n^{\text{initial}}, t_{\textup{min}}, \eta, \vartheta_n, \mathcal{C}_n);$$

  Update $$U_{n+1}=\text{ortho}(U_n,D_n,t_n);$$

  Set $n=n+1$ and compute $\nabla_G E(U_n)$;
 }
\end{algorithm}

We see from Algorithms \ref{adaptive} and \ref{linesearch3} that our step size strategy requires the information about (Grassmann) Hessian $\nabla^2_G E(U_n)[D_n,D_n]$ in {\bf Estimate} step at each iteration.
However, when compared with the backtracking approach, our strategy needs not to compute the trial point and the corresponding function value repeatedly, which is the most expensive part in orthogonality constrained line search methods. As a result, the total cost at each iteration may decrease. In addition, our adaptive strategy will give a reasonable step size which is either the initial guess recommended by some classic step size strategy or the minimizer of the second order approximation of the objective function around the current iteration point. For comparison, the backtracking procedure gives an acceptable but unassessable number that satisfies \eqref{back-cond1}. One can never say that it is a persuasive one among the set:
$$\{t\in\mathbb{R}^+:t \ \text{satisfies} \ \eqref{back-cond1}\}.$$

To establish the convergence theory of Algorithm \ref{linesearch3}, we need the following assumption:
\begin{assume}\label{assum_bound}
Grassmann Hessian $\nabla_G^2 E(U)$ is bounded, that is, there exist $\bar{C}>0$
such that
\begin{equation}\label{hess-bound}
\| \nabla_G^2 E(U)[D]\|_{V^N} \leq \bar{C}\|D\|_{V^N}, \ \ \forall \ U\in \mathcal{M}^N, D\in\mathcal{T}_{[U]}\mathcal{G}^N.
\end{equation}
\end{assume}

We see from \eqref{taylor3} that \eqref{hess-bound} typically results in
\begin{equation}\label{der-bound}
\| \nabla_G E(U)\|_{V^N} \leq C_0, \ \ \forall \ U\in \mathcal{M}^N,
\end{equation}
where $C_0$ can be chosen as $2N\bar{C}$. Also, \eqref{der-bound} holds simply owing to $\mathcal{M}^N$ is compact.

The following theorem shows the convergence of Algorithm \ref{linesearch3} if we choose $\{\vartheta_n\}_{n=0}^\infty$ properly:
\begin{theorem}\label{mainconvergence}
Suppose E(U) is of second order differentiable and let Assumption \ref{assum_bound} holds true. If $\{D_n\}_{n=0}^\infty$ is chosen to satisfy \eqref{decrease-direction}, \eqref{not-ortho} and Assumption \ref{Dbound}, then there exists a positive sequence $\{\vartheta_n\}_{n=0}^\infty$, such that for the sequence  $\{U_n\}_{n=0}^\infty$  generated by
Algorithm \ref{linesearch3},
either $\|\nabla_G E(U_n)\|_{V^N}=0$ for some positive integer $n$ or
\begin{equation*}
\liminf_{n\to\infty} \| \nabla_G E(U_n)\|_{V^N}  = 0.
\end{equation*}
\end{theorem}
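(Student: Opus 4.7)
My plan is to reduce Theorem~\ref{mainconvergence} to Theorem~\ref{l1converge} by showing that, for a suitable choice of $\{\vartheta_n\}_{n=0}^\infty$, the step sizes produced by the inner routine Algorithm~\ref{adaptive} inside Algorithm~\ref{linesearch3} satisfy the non-monotone condition \eqref{back-cond1} (with $\eta$ replaced by $\eta/2$) and remain bounded away from zero along the descent-active subsequence selected by \eqref{not-ortho}. The bridge is Proposition~\ref{Taylor}, which together with Assumption~\ref{assum_bound} and the induced uniform continuity of $\nabla_G^2 E$ yields
\begin{equation*}
E(U_{n+1}) = E(U_n) + t_n \langle \nabla_G E(U_n), D_n\rangle_{V^N} + \tfrac{t_n^2}{2}\nabla_G^2 E(U_n)[D_n, D_n] + R_n,
\end{equation*}
with a uniform remainder bound $|R_n|\leq \omega(t_n\|D_n\|_{V^N})\, t_n^2\|D_n\|_{V^N}^2$ for some modulus $\omega(s)\to 0$ as $s\to 0^+$.

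First I verify that the $t_n$ returned by Algorithm~\ref{adaptive} always satisfies the quadratic-model Armijo condition $\zeta_n(t_n)\geq \eta$, assuming without loss of generality $\eta\leq 1/2$. This is automatic in the accepting branch. In the Improve branch a direct calculation at the unconstrained quadratic minimizer gives
\begin{equation*}
\zeta_n\!\Bigl(-\tfrac{\langle \nabla_G E(U_n),D_n\rangle_{V^N}}{\nabla_G^2 E(U_n)[D_n,D_n]}\Bigr) = \tfrac{1}{2} + \tfrac{(\mathcal{C}_n - E(U_n))\,\nabla_G^2E(U_n)[D_n,D_n]}{\langle \nabla_G E(U_n),D_n\rangle_{V^N}^2} \geq \tfrac{1}{2},
\end{equation*}
using $\mathcal{C}_n\geq E(U_n)$ from \eqref{non-mono}; the monotonicity of $\zeta_n$ in $t$ transfers the same bound to the truncated case $t_n=\vartheta_n/\|D_n\|_{V^N}$, and the indefinite-Hessian case is even more favourable. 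Rearranging $\zeta_n(t_n)\geq\eta$ and adding the Taylor remainder then yields
\begin{equation*}
E(U_{n+1}) - \mathcal{C}_n \leq \eta t_n \langle \nabla_G E(U_n), D_n\rangle_{V^N} + R_n.
\end{equation*}

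Next I pick $\vartheta_n$ small enough so that $\omega(\vartheta_n)\vartheta_n\|D_n\|_{V^N}\leq \tfrac{\eta}{2}|\langle\nabla_G E(U_n),D_n\rangle_{V^N}|$, which is possible since $\omega(s)s\to 0$ as $s\to 0^+$ and the right-hand side is positive by \eqref{decrease-direction}. Combined with $t_n\|D_n\|_{V^N}\leq \vartheta_n$ this forces $|R_n|\leq \tfrac{\eta}{2}t_n|\langle\nabla_G E(U_n),D_n\rangle_{V^N}|$, so \eqref{back-cond1} holds with parameter $\eta/2$. To secure the subsequential divergence \eqref{sumunbound}, I argue by contradiction: if $\|\nabla_G E(U_n)\|_{V^N}\geq \epsilon>0$ for all $n$, then by \eqref{not-ortho} and Assumption~\ref{Dbound} some subsequence $\{n_j\}$ satisfies $|\langle\nabla_G E(U_{n_j}), D_{n_j}\rangle_{V^N}|/\|D_{n_j}\|_{V^N}\geq c\epsilon^a/C>0$, so the corresponding $\vartheta_{n_j}$ can be taken uniformly bounded below. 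The adaptive formula together with Assumption~\ref{assum_bound} then gives $t_{n_j}\geq \min(c\epsilon^a/(\bar{C}C^2),\,\vartheta_{n_j}/C) > 0$ uniformly in $j$, hence $\sum_j t_{n_j}=\infty$.

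Applying Theorem~\ref{l1converge} (in the form noted in its remark, with \eqref{sumunbound} in place of \eqref{boundbelow}) with parameter $\eta/2$ then delivers $\liminf_n\|\nabla_G E(U_n)\|_{V^N}=0$, contradicting the standing hypothesis. The principal obstacle I expect is the coupled calibration of $\vartheta_n$: it must shrink enough to dominate the a priori non-uniform Taylor remainder, yet remain large enough to keep $t_n$ from collapsing to zero on the descent-active subsequence. The descent-strength estimate \eqref{not-ortho}, which in the contradiction regime pins $|\langle\nabla_G E(U_n), D_n\rangle_{V^N}|/\|D_n\|_{V^N}$ away from zero on that subsequence, is precisely what reconciles these two competing requirements.
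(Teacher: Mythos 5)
Your proposal follows the same skeleton as the paper's proof: reduce to Theorem \ref{l1converge} by establishing \eqref{back-cond1} (with effective parameter $\eta/2$) together with the subsequential divergence \eqref{sumunbound}, use the Taylor expansion of Proposition \ref{Taylor} to control the gap between the quadratic model and the true objective, and bound the step sizes below on the subsequence selected by \eqref{not-ortho} via a case analysis on the three possible values returned by Algorithm \ref{adaptive} (initial/minimal step, quadratic minimizer, truncation at $\vartheta_n/\|D_n\|_{V^N}$), using Assumptions \ref{Dbound} and \ref{assum_bound} exactly as the paper does for the quadratic-minimizer case. Where you genuinely diverge is in the calibration of $\vartheta_n$: the paper defines $\vartheta_n$ implicitly as the supremum of radii on which the Taylor remainder is dominated by $-\tfrac{\eta}{2}t\langle\nabla_G E(U_n),D_n\rangle_{V^N}$, which makes \eqref{back-cond1} automatic but forces a separate contradiction argument (its ``Case 3'') when $\vartheta_{n_j}\to 0$, exploiting the supremum property to find near-boundary points $t_{n_j}^\ast$ where the remainder bound fails. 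You instead impose an explicit smallness condition on $\vartheta_n$ through a modulus $\omega$, which lets you keep $\vartheta_{n_j}$ uniformly bounded below on the active subsequence (since there $-\langle\nabla_G E(U_{n_j}),D_{n_j}\rangle_{V^N}\ge\delta\epsilon^a$ under the contradiction hypothesis) and dispatch \eqref{sumunbound} in one stroke. Your version is more transparent about why the two competing requirements on $\vartheta_n$ are compatible, and you also supply a verification the paper omits: that the Improve step actually yields $\zeta_n(t_n)\ge 1/2$ (via the value of $\zeta_n$ at the unconstrained quadratic minimizer, $\mathcal{C}_n\ge E(U_n)$ from \eqref{non-mono}, and the monotonicity of $\zeta_n$), which is why the ``$\eta\le 1/2$ without loss of generality'' caveat is needed.

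One technical slip to repair: your uniform remainder bound $|R_n|\le\omega(t_n\|D_n\|_{V^N})\,t_n^2\|D_n\|_{V^N}^2$ is too strong for a general retraction. The remainder splits, as in the paper, into the geodesic Taylor error (which is indeed $o(t^2\|D_n\|_{V^N}^2)$) and the retraction-versus-geodesic discrepancy $E(\textup{ortho}(U_n,D_n,t))-E(\textup{exp}_{[U_n]}(tD_n))$, which by \eqref{orthoerror} and \eqref{der-bound} is only $o(t\|D_n\|_{V^N})$ --- first order, so it cannot be absorbed into a bound of the form $\omega(s)s^2$. The fix is cosmetic: write $|R_n|\le\omega_1(t_n\|D_n\|_{V^N})\,t_n\|D_n\|_{V^N}+\omega_2(t_n\|D_n\|_{V^N})\,t_n^2\|D_n\|_{V^N}^2$ and require $\omega_1(\vartheta_n)\|D_n\|_{V^N}+\omega_2(\vartheta_n)\vartheta_n\|D_n\|_{V^N}\le\tfrac{\eta}{2}|\langle\nabla_G E(U_n),D_n\rangle_{V^N}|$, which is still achievable for small $\vartheta_n$ by \eqref{decrease-direction} and still admits a uniform lower bound on the active subsequence. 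You should also acknowledge that the uniformity of $\omega_1,\omega_2$ over $n$ rests on compactness of $\mathcal{M}^N$ and of $\{D:\|D\|_{V^N}\le C\}$ rather than on Assumption \ref{assum_bound} alone (boundedness of $\nabla_G^2E$ does not by itself give uniform continuity); the paper glosses over the same point when it asserts its $o(\cdot)$ estimates uniformly in $n$.
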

\begin{proof}
By Theorem \ref{l1converge}, it is sufficient to prove that there exists a positive sequence $\{\vartheta_n\}_{n=0}^\infty$, such that the step size $t_n$ satisfies \eqref{back-cond1} and \eqref{sumunbound}.
We see from Algorithm \ref{linesearch3} that every $t_n$ is chosen to satisfy
\begin{eqnarray}
\zeta_n(t_n)&\ge&\eta, \\
t_n\|D_n\|&\le&\vartheta_n,
\end{eqnarray}
which imply that
$$E(U_n)+t\textup{tr}(\nabla_G E(U_n)^TD_n)+\frac{t^2}{2}\nabla_G^2 E(U_n)[D_n,D_n]-\mathcal{C}_n\le\eta t_n\textup{tr}(\nabla_G E(U_n)^TD_n).$$
Let
\begin{eqnarray*}
\vartheta_n=\sup\{\tilde{\vartheta}_n: &&E(\textup{ortho}(U_n,D_n,t))-E(U_n)-t\textup{tr}(\nabla_G E(U_n)^TD_n) \\
&&-\frac{t^2}{2}\nabla_G^2 E(U_n)[D_n,D_n]\le-\frac{\eta t\textup{tr}(\nabla_G E(U_n)^TD_n)}{2}, \forall t\le\frac{\tilde{\vartheta}_n}{\|D_n\|_{V^N}}\}.
\end{eqnarray*}
Then we obtain from the definition of $E(U_{n+1})$ and $\vartheta_n$ that
$$E(U_{n+1})-\mathcal{C}_n\le\frac{\eta}{2} t_n\textup{tr}(\nabla_G E(U_n)^TD_n), \forall n\in\mathbb{N}_0,$$
i.e., \eqref{back-cond1} holds.

As for \eqref{sumunbound}, we only need to take subsequence $\{n_j\}_{j=0}^{\infty}$ such that
$$\lim_{j\to\infty}-\frac{\textup{tr}(\nabla_G E(U_{n_j})^TD_{n_j})}{\|\nabla_G E(U_{n_j})\|_{V^N}^a}=\delta>0$$
into account.

The corresponding $t_{n_j}$ has only three options, say, $$t_{n_j}=\max{(t_{n_j}^{\text{initial}}, t_{\textup{min}})},$$ $$t_{n_j}=\frac{-\textup{tr}(\nabla_G E(U_{n_j})^TD_{n_j})}{\nabla_G^2 E(U_{n_j})[D_{n_j}, D_{n_j}]},$$ or $$t_{n_j}=\frac{\vartheta_{n_j}}{\|D_{n_j}\|_{V^N}}.$$ So there is at least one infinite subsequence of $\{n_j\}_{j=0}^{\infty}$, which is, with out loss of generality, also denoted by $\{n_j\}_{j=0}^{\infty}$, such that

{\bf Case 1.} $t_{n_j}=\max{(t_{n_j}^{\text{initial}}, t_{\textup{min}})}$. We have immediately $$\sum_{j=0}^{\infty} t_{n_j}\ge\sum_{j=0}^{\infty} t_{\textup{min}} =+\infty.$$

{\bf Case 2.} $t_{n_j} = \frac{-\textup{tr}(\nabla_G E(U_{n_j})^TD_{n_j})}{\nabla_G^2 E(U_{n_j})[D_{n_j}, D_{n_j}]}$. We obtain from Assumptions \ref{Dbound} and \ref{assum_bound} that
\begin{eqnarray*}
t_{n_j}&\ge&\frac{-\textup{tr}(\nabla_G E(U_{n_j})^TD_{n_j})}{\bar{C}\|D_{n_j}\|_{V^N}^2} \\
&\ge&\frac{-\textup{tr}(\nabla_G E(U_{n_j})^TD_{n_j})}{\|\nabla_G E(U_{n_j})\|_{V^N}^a}\frac{\|\nabla_G E(U_{n_j})\|_{V^N}^a}{\bar{C}\|D_{n_j}\|_{V^N}^2} \\
&\ge&\frac{\delta}{\bar{C}C^2}\|\nabla_G E(U_{n_j})\|_{V^N}^a.
\end{eqnarray*}
Hence, either $$\sum_{j=0}^{\infty} t_{n_j}=+\infty$$
or $$\lim_{j\to\infty}\|\nabla_G E(U_{n_j})\|_{V^N}=0.$$

{\bf Case 3.} $t_{n_j} = \frac{\vartheta_{n_j}}{\|D_{n_j}\|_{V^N}}$.
If
$$\liminf_{j\to\infty} t_{n_j}>0,$$
then \eqref{sumunbound} is satisfied and we complete the proof.

Assume otherwise, i.e., there exists a subsequence also denoted by $\{n_j\}$ such that
$\lim_{j\to\infty} t_{n_j}=0,$
or equivalently $\lim_{j\to\infty}\vartheta_{n_j}=0$ thanks to Assumption \ref{Dbound}.

For simplicity, we sometimes denote $U_{n+1}(t) = \textup{ortho}(U_n,D_n,t)$, then $U_{n+1}=U_{n+1}(t_n)$. We have that for all $n\in\mathbb{N}_0$, there hold
\begin{eqnarray*}
&&E(U_{n+1}(t))-E(U_n)-t\textup{tr}(\nabla_G E(U_n)^TD_n)-\frac{t^2}{2}\nabla_G^2 E(U_n)[D_n,D_n]  \\
&=& E(U_{n+1}(t)) - E(\textup{exp}_{[U_n]}(tD_n)) + E(\textup{exp}_{[U_n]}(tD_n)) - E(U_n) \\
&&-t\textup{tr}(\nabla_G E(U_n)^TD_n)-\frac{t^2}{2}\nabla_G^2 E(U_n)[D_n,D_n] \notag := T_n^{(1)}+T_n^{(2)}, \notag
\end{eqnarray*}
where $$T_n^{(1)} = E(U_{n+1}(t)) - E(\textup{exp}_{[U_n]}(tD_n))$$ and $$T_n^{(2)} = E(\textup{exp}_{[U_n]}(tD_n)) - E(U_n)-t\textup{tr}(\nabla_G E(U_n)^TD_n)-\frac{t^2}{2}\nabla_G^2 E(U_n)[D_n,D_n].$$
We see from Remark \ref{uni-geo} that there exists a geodesic $[\textup{exp}_{[U_{n+1}(t)]}(t\hat{D})]$ such that
\begin{eqnarray*}
\textup{exp}_{[U_{n+1}(t)]}({\bf 0}) = U_{n+1}(t), \ [\textup{exp}_{[U_{n+1}(t)]}(\hat{D})] = [\textup{exp}_{[U_n]}(tD_n)],
\end{eqnarray*}
and obtain by \eqref{taylor1} that
\begin{eqnarray*}
|T_n^{(1)}| &=& |E(\textup{exp}_{[U_{n+1}(t)]}(0\hat{D})) - E(\textup{exp}_{[U_{n+1}(t)]}(\hat{D}))| \\
&=& |\langle\nabla_G E(\textup{exp}_{[U_{n+1}(t)]}(\xi \hat{D})), \tau_{\scriptscriptstyle{(U_{n+1}(t),\hat{D},\xi)}} \hat{D}\rangle_{V^N}| \\
&\leq& \|\nabla_G E(\textup{exp}_{[U_{n+1}(t)]}(\xi \hat{D}))\|_{V^N}\|\tau_{\scriptscriptstyle{(U_{n+1}(t),\hat{D},\xi)}} \hat{D}\|_{V^N} \\
&\leq& C_0\|\hat{D}\|_{V^N}, 
\end{eqnarray*}
where \eqref{der-bound} and \eqref{par_invariant} are used in the last inequality. By \eqref{DeTheta} and \eqref{orthoerror}, we get
\begin{eqnarray*}
\|\hat{D}\|_{V^N} &=& \textup{dist}_{geo}([U_{n+1}(t)], [\textup{exp}_{[U_n]}(tD_n)]) \\ &\leq& 2\textup{dist}_{cF}([U_{n+1}(t)], [\textup{exp}_{[U_n]}(tD_n)]) \\
&\leq& 2\|U_{n+1}(t) - \textup{exp}_{[U_n]}(tD_n)\|_{V^N} \\
&\leq& 2\big(\|\textup{ortho}(U_n, D_n, t) - U_n - tD_n\|_{V^N} \\
&&+ \|\textup{exp}_{[U_n]}(tD_n) - U_n - tD_n\|_{V^N}\big) \\
&=& o(t\|D_n\|_{V^N}),
\end{eqnarray*}
which leads to
\begin{equation}\label{I1}
T_n^{(1)} = o(t\|D_n\|_{V^N}).
\end{equation}
As for $T_n^{(2)}$, \eqref{taylor1} gives that
\begin{equation}\label{I2}
T_n^{(2)} = o(t^2\|D_n\|_{V^N}^2).
\end{equation}
Combining \eqref{I1} and \eqref{I2}, we arrive at
\begin{eqnarray*}
&&E(U_{n+1}(t))-E(U_n)-t\textup{tr}(\nabla_G E(U_n)^TD_n)-\frac{t^2}{2}\nabla_G^2 E(U_n)[D_n,D_n]  \\
&=& T_n^{(1)}+T_n^{(2)} = o(t\|D_n\|_{V^N}), ~~ \forall n\in\mathbb{N}_0.
\end{eqnarray*}

Note that the definition of $\vartheta_{n_j}$ implies that for all $n_j$, there exist $$t_{n_j}^\ast\in(\frac{\vartheta_{n_j}}{\|D_{n_j}\|_{V^N}}, \frac{\vartheta_{n_j}+\frac{1}{n_j}}{\|D_{n_j}\|_{V^N}})$$ such that
\begin{eqnarray}\label{contra}
o(t_{n_j}^\ast\|D_{n_j}\|_{V^N})  &=& E(\textup{ortho}(U_{n_j},D_{n_j},t_{n_j}^\ast))-E(U_{n_j}) \notag\\
&&-t_{n_j}^\ast\textup{tr}(\nabla_G E(U_{n_j})^TD_{n_j})-\frac{{t_{n_j}^\ast}^2}{2}\nabla_G^2 E(U_{n_j})[D_{n_j},D_{n_j}] \notag\\
&>&-\frac{\eta t_{n_j}^\ast\textup{tr}(\nabla_G E(U_{n_j})^TD_{n_j})}{2} \notag\\
&=& \frac{\eta}{2}\frac{ -\textup{tr}(\nabla_G E(U_{n_j})^TD_{n_j})}{\|\nabla_G E(U_{n_j})\|_{V^N}^a}\frac{1}{\|D_{n_j}\|_{V^N}}t_{n_j}^\ast\|D_{n_j}\|_{V^N}\|\nabla_G E(U_{n_j})\|_{V^N}^a.
\end{eqnarray}
It is easy to see that
$$0\le\lim_{j\to\infty}t_{n_j}^\ast\|D_{n_j}\|_{V^N}\le\lim_{j\to\infty}\big(\vartheta_{n_j}+\frac{1}{n_j}\big)=0.$$
Hence, by letting $j\to\infty$ in \eqref{contra}, we arrive at
$$0\ge\lim_{j\to\infty}\frac{\eta\delta}{2C_0}\|\nabla_G E(U_{n_j})\|_{V^N}^a,$$
which completes our proof.
\end{proof}

The above discussions indicate that a line search method equipped with some standard search directions and our adaptive step sizes globally converges to a stationary point under some mild assumptions. In addition, our step size strategy is much cheaper than Algorithm \ref{backtracking} at an iteration of a line search method where backtracking step occurs.
\section{Applications to electronic structure calculations}\label{sec-num}
In this section, we apply the adaptive step size strategy to a gradient type method to solve the Kohn-Sham energy minimization problem. We choose the negative gradient directions to be the search directions and the BB step sizes for the initial guesses $t_n^{\text{initial}}$ \cite{ZZWZ}. We then compare some different step size strategies to show the advantages of ours. 
\subsection{Kohn-Sham DFT model}
In Kohn-Sham DFT model, $U = (u_1, \dots, u_N)\in {\big(H^1(\mathbb{R}^3)\big)}^N$ and the objective functional $E_{\textup{KS}}(U)$ reads as
\begin{eqnarray}\label{energy}
E_{\textup{KS}}(U)&=&\frac{1}{2} \int_{\mathbb{R}^3} \sum_{i=1}^N|\nabla u_i(r)|^2 dr +
\frac{1}{2}\int_{\mathbb{R}^3}\int_{\mathbb{R}^3}\frac{\rho(r)\rho(r')}{|r-r'|}drdr' \nonumber\\
&&+\int_{\mathbb{R}^3} V_{ext}(r)\rho(r)dr+ \int_{\mathbb{R}^3}
\varepsilon_{xc}(\rho)(r)\rho(r)dr,
\end{eqnarray}
where $N$ denotes the number of electrons, $u_i$ are sometimes called the Kohn-Sham orbitals,
$\rho(r)=\sum \limits_{i=1}^N|u_i(r)|^2$ is the electronic density,
$V_{ext}(r)$ is the external potential generated by the nuclei, and $\varepsilon_{xc}(\rho)(r)$ is the exchange-correlation functional
which is not known explicitly. In practise, some approximation such as local density approximation (LDA),
generalized gradient approximation (GGA) or some other approximations has to be used \cite{Ma}.

As mentioned in Section \ref{sec-pre}, the Grassmann gradient of $E_{\textup{KS}}(U)$ is
\begin{equation*}
    \nabla_G E_{\textup{KS}}(U)  = (I - UU^T)\nabla E_{\textup{KS}}(U).
\end{equation*}
We see from \cite{DLZZ} that $\nabla E_{\textup{KS}}(U) = \mathcal{H}(\rho)U$, $$\mathcal{H}(\rho) = -\frac{1}{2}\Delta + V_{ext} + \int_{\mathbb{R}^3}\frac{\rho(r')}{|r-r'|}dr' +
v_{xc}(\rho),$$
and $$v_{xc}(\rho)
= \frac{\delta(\rho\varepsilon_{xc}(\rho))}{\delta\rho}.$$

In further, the Hessian of $E_{\textup{KS}}(U)$ on the Grassmann manifold $\mathcal{G}_N$ has the form \cite{DLZZ}
\begin{equation*}
\begin{split}
\nabla^2_GE(U)[D_1, D_2] &= \text{tr} ( {D_1}^T\mathcal{H}(\rho)
D_2) - \text{tr}( {D_1}^TD_2U^T\mathcal{H}(\rho)U) \\
&+ 2\int_{\mathbb{R}^3}\int_{\mathbb{R}^3}\frac{(\sum_iu_i(r)d_{1,i}(r))(\sum_ju_j(r')d_{2,j}(r'))}
{|r-r'|}drdr' \\
&+ 2\int_{\mathbb{R}^3}\frac{\delta^2(\varepsilon_{xc}(\rho)\rho)}{\delta\rho^2}(r)
(\sum_iu_i(r)d_{1,i}(r))(\sum_ju_j(r)d_{2,j}(r))dr
\end{split}
\end{equation*}
provided that the total energy functional is of second order differentiable, or more specifically,
the approximated exchange-correlation functional is of second order differentiable.
Here, $D_i = (d_{i,1}, d_{i,2}, \cdots, d_{i,N})(i = 1,2)$ belong to $\mathcal{T}_{[U]}\mathcal{G}^N$.

We may discrete Kohn-Sham model \eqref{energy} by the plane wave method, the local basis set method, or
some real space methods. In this paper, we focus on the real space method. If we choose the $N_g$-dimension space \\ $V_{N_g}\subset H^1(\mathbb{R}^3)$
to approximate $H^1(\mathbb{R}^3)$, then the associated discretized Kohn-Sham model
can be formulated as
\begin{equation}\label{E_dKS}
\min_{[U]\in \mathcal{G}^N_{N_g}} \ \ \ E_{KS}(U),
\end{equation}
where $\mathcal{G}^N_{N_g}$ is the discretized Grassmann manifold defined by
\begin{equation*}
\mathcal{G}^N_{N_g} = \mathcal{M}^N_{N_g}/\sim,
\end{equation*}
\begin{equation*}
\mathcal{M}^N_{N_g} = \{U \in(V_{N_g})^N:   U^TU  = I_N\}
\end{equation*}
is the discretized Stiefel manifold and the equivalent relation $\sim$ has the similar meaning to what we have mentioned in Section \ref{sec-pre}. Typically, $N\ll N_g$.

We refer to \cite{DLZZ} for more detailed expressions of the Kohn-Sham DFT model under other type of discretizations, for instance, the finite difference discretization.

\subsection{Numerical experiments}
One class of the most basic algorithms for orthogonality constrained problems are the gradient type methods, which have been investigated in \cite{AbMaSe,SRNB,Smith94,WY,ZZWZ}.
In \cite{ZZWZ}, the well known BB step size is applied to accelerate the gradient type algorithms. More precisely, the initial step size at iteration $n$ is chosen as
\begin{equation}\label{tau1}
\tilde{t}_{n,1}=\frac{tr({S_{n-1}}^TS_{n-1})}{|tr({S_{n-1}}^TY_{n-1})|}
\end{equation}
or
\begin{equation}\label{tau2}
\tilde{t}_{n,2}=\frac{|tr({S_{n-1}}^TY_{n-1})|}{tr({Y_{n-1}}^TY_{n-1})}.
\end{equation}
 Here,
 \begin{center}
 $S_{n-1}=U_n-U_{n-1}$, $Y_{n-1}=\nabla_G E(U_n)-\nabla_G E(U_{n-1}).$
 \end{center}
 The non-monotone backtracking procedure is then applied to guarantee the convergence. We point out that the algorithm proposed in \cite{ZZWZ} can be viewed as a special case of Algorithm \ref{linesearch} by choosing $D_n=-\nabla_G E(U_n)$ and $t_n$ by Algorithm \ref{backtracking} with initial step sizes \eqref{tau1} or \eqref{tau2}. We test the case that $t_n^{\textup{initial}} = \tilde{t}_{n,1}$, $t_n^{\textup{initial}} = \tilde{t}_{n,2}$ and 
 \begin{equation}\label{initialstep}
 t_n^{\textup{initial}} = 
 \begin{cases}
  \tilde{t}_{n,1},& \mbox{for $n$ odd},  \\
  \tilde{t}_{n,2},& \mbox{for $n$ even},
 \end{cases}
 \end{equation}
 respectively, and choose the overall better one, i.e., \eqref{initialstep}, in our numerical experiments. We refer to Appendix \ref{A2} for detailed comparisons. 

We apply the gradient method with different step size strategies on the software package
Octopus\footnote[1]{Octopus:www.tddft.org/programs/octopus.} (version 4.0.1), and carry out all numerical experiments on LSSC-IV in the State Key Laboratory of
Scientific and Engineering Computing of the Chinese Academy of Sciences. We choose LDA to approximate $v_{xc}(\rho)$ \cite{PeZu} and use the Troullier-Martins
norm conserving pseudopotential \cite{TrMa}. 

Our examples include several typical molecular systems: benzene ($C_6H_6$), aspirin ($C_9H_8O_4)$,
fullerene ($C_{60}$), alanine chain $(C_{33}H_{11}O_{11}N_{11})$, carbon
nano-tube ($C_{120}$), carbon clusters $C_{1015}H_{460}$ and $C_{1419}H_{556}$.
We use QR strategy as retraction, that is,
$$\textup{ortho}(U,D,t)=(U+t D)R^{-1},$$
where $R$ is a upper-triangular matrix such that $$R^TR = (U+t D)^T(U+t D) = I_N + t^2D^TD.$$

We show the detailed results obtained by the gradient method with different step size strategies in TABLE \ref{t1},
in which ``iter" means the number of iterations required to terminate the algorithm, $\| \nabla_G E\|_F$ forms the norm of the gradient when the algorithm terminates, ``W.C.T" is the total wall clock time spent to converge, and ``A.T.P.I" is the average wall clock time needed per iteration.

In TABLE \ref{t1}, GM-QR-Back means the non-monotone backtracking-based algorithm ($\alpha = 0.85$) proposed and applied in \cite{ZZWZ}. We use our estimator \eqref{indicator4} to generate our adaptive algorithm with $\alpha=0.85$, and the corresponding results are named as GM-QR-Adap. We should mention that $\alpha$ is chosen to be $0.85$ since it is recommended in \cite{ZH}. We choose the parameter $\vartheta_n = 0.2$, for all $n$. Among all our experiments, $\eta=1$e$-4$, which is recommended in \cite{NW}, $t_{\textup{min}}=1$e$-20$ and $k=0.5$. It is worth mentioning that the parameter $k$ is only used in backtracking-based algorithm and we understand that the performance of the backtracking-based method is $k$-dependent. A small $k$ may make some final step sizes $t_n$ too small and a big $k$ typically leads to a large amount of extra backtrackings. Hence, we choose $k=0.5$ to find a balance.
For all the systems except $C_{1015}H_{460}$ and $C_{1419}H_{556}$, $\epsilon$ is chosen to be $1$e$-12$, and for those two relatively large systems, $\epsilon = 1$e$-11$.

\begin{center}
\begin{table}[!htbp]
\caption{The numerical results for systems obtained by gradient type methods with different step size strategies.}
\label{t1}
\begin{center}
{\small
\begin{tabular}{|c| c c c c c|}
\hline
Algorithm & energy (a.u.) & iter &  $\| \nabla_G E\|_F $ & W.C.T (s)&  A.T.P.I (s) \\
\hline
\multicolumn{6}{|c|}{benzene$(C_6H_6) \ \ \  N_g = 102705 \ \ \  N = 15 \ \ \  cores = 8$} \\
\hline
GM-QR-Back  & -3.74246025E+01 & 545  & 9.33E-13 & 24.11 & 0.044 \\
GM-QR-Adap  & -3.74246025E+01 & 334  & 7.53E-13 & 11.36 & 0.034 \\
\hline
\multicolumn{6}{|c|}{aspirin$(C_9H_8O_4) \ \ \  N_g = 133828 \ \ \  N = 34 \ \ \  cores = 16$} \\
\hline
GM-QR-Back  & -1.20214764E+02 & 471  & 9.83E-13 & 43.42 & 0.092 \\
GM-QR-Adap  & -1.20214764E+02 & 327  & 8.86E-13 & 26.47 & 0.081 \\
\hline
\multicolumn{6}{|c|}{fullerene$(C_{60}) \ \ \ N_g = 191805  \ \ \  N = 120 \ \ \  cores = 16$} \\
\hline
GM-QR-Back    & -3.42875137E+02 & 1050 & 9.02E-13 & 945.60 & 0.901 \\
GM-QR-Adap    & -3.42875137E+02 & 558  & 8.17E-13 & 371.26 & 0.665 \\
\hline
\multicolumn{6}{|c|}{alanine chain$(C_{33}H_{11}O_{11}N_{11})\ \ \  N_g = 293725 \ \ \  N = 132
\ \ \  cores = 32$} \\
\hline
GM-QR-Back    & -4.78562217E+02 & 8754  & 9.86E-13 & 10720.76 & 1.225 \\
GM-QR-Adap    & -4.78562217E+02 & 3376  & 9.99E-13 & 3185.21  & 0.943 \\
\hline
\multicolumn{6}{|c|}{carbon nano-tube$(C_{120}) \ \ \ N_g = 354093 \ \ \  N = 240 \ \ \  cores = 32$} \\
\hline
GM-QR-Back    & -6.84467048E+02 & 16161  & 9.99E-13 & 64443.77 & 3.988 \\
GM-QR-Adap    & -6.84467048E+02 & 7929   & 9.98E-13 & 23580.85 & 2.974 \\
\hline
\multicolumn{6}{|c|}{$C_{1015}H_{460} \ \ \ N_g = 1462257 \ \ \  N = 2260 \ \ \  cores = 256$} \\
\hline
GM-QR-Back   & -6.06369982E+03 & 798    & 9.50E-12 & 1764383.74 & 2211.007\\
GM-QR-Adap   & -6.06369982E+03 & 397    & 9.67E-12 & 348390.53  & 877.558 \\
\hline
\multicolumn{6}{|c|}{$C_{1419}H_{556} \ \ \ N_g = 1828847 \ \ \  N = 3116 \ \ \  cores = 512$} \\
\hline
GM-QR-Back    & -8.43085432E+03 & 656    & 9.81E-12 & 3152364.90 & 4805.434 \\
GM-QR-Adap    & -8.43085432E+03 & 368    & 8.26E-12 & 725840.47  & 1972.393 \\
\hline
\end{tabular}}
\end{center}
\end{table}
\end{center}

As is shown in TABLE \ref{t1},  the average computational time for each iteration for our adaptive algorithm GM-QR-Adap is indeed much shorter compared with the backtracking-base algorithms. In addition, our adaptive algorithm needs less iterations to achieve the same accuracy. To see the results more clearly, we present the convergence curves of the residual obtained by gradient type methods with different step size strategies in Fig. \ref{f-1}, from which the similar conclusions can be observed.
\begin{figure}
\centering
\begin{minipage}[b]{0.5 \textwidth}
\centerline{Benzene}
\includegraphics[width=0.9\textwidth]{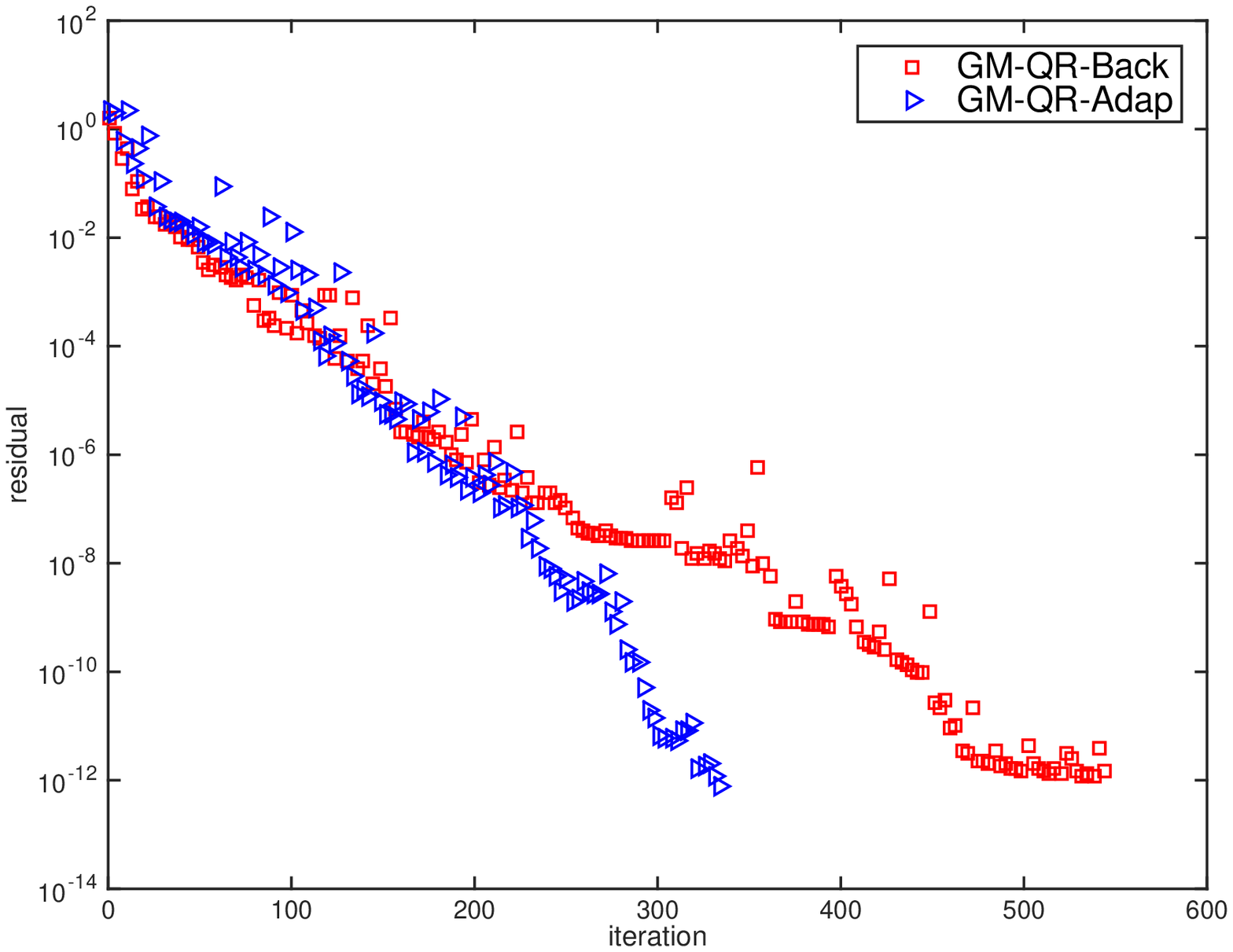}
\end{minipage}
\hspace{-0.2in}
\begin{minipage}[b]{0.5 \textwidth}
\centerline{$C_9H_8O_4$}
\includegraphics[width=0.9\textwidth]{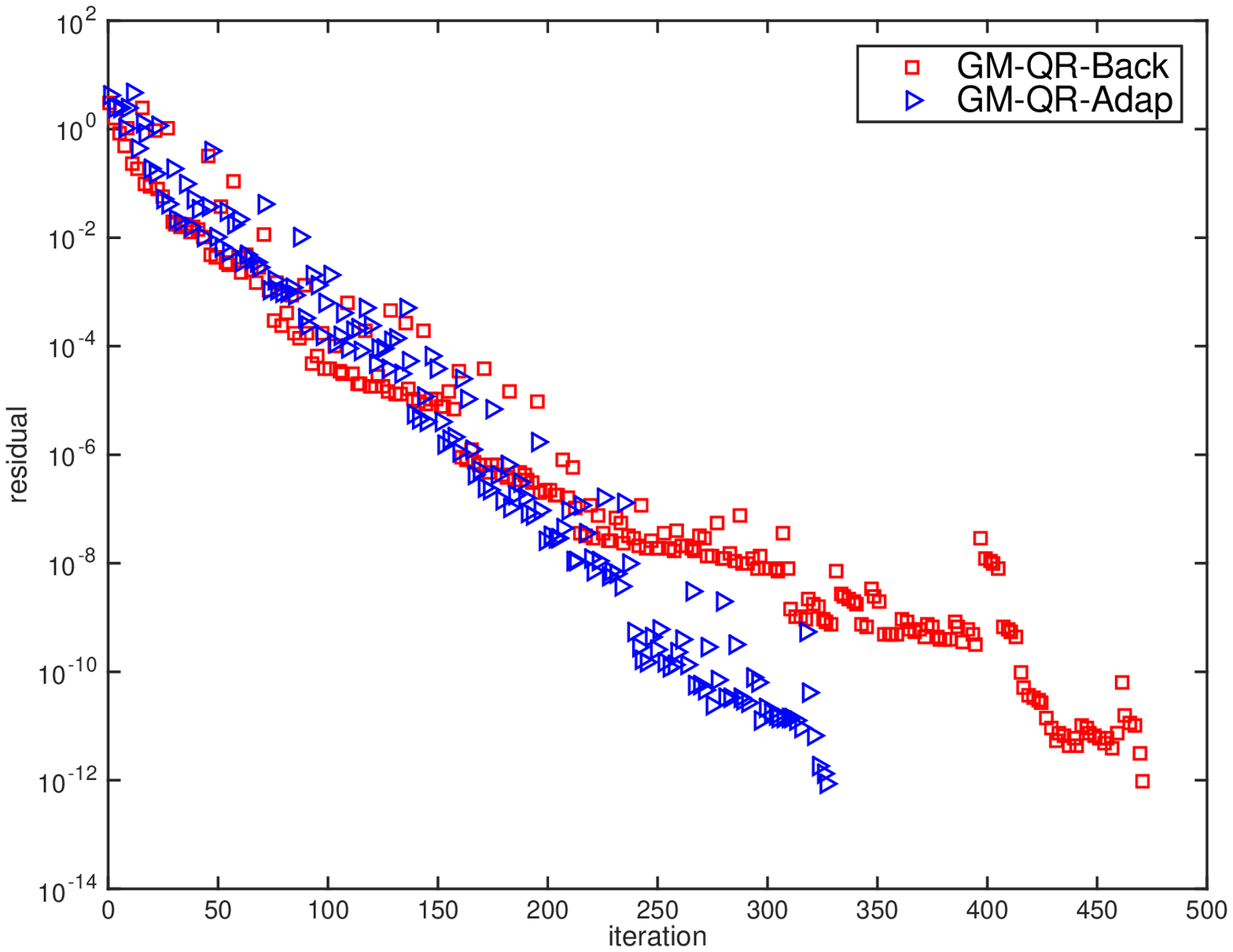}
\end{minipage}

\hspace{-50.0pt}
\par \vspace{-10.pt}
\hspace{-40.0pt}

\begin{minipage}[b]{0.5 \textwidth}
\centerline{$C_{60}$}
\includegraphics[width=0.9\textwidth]{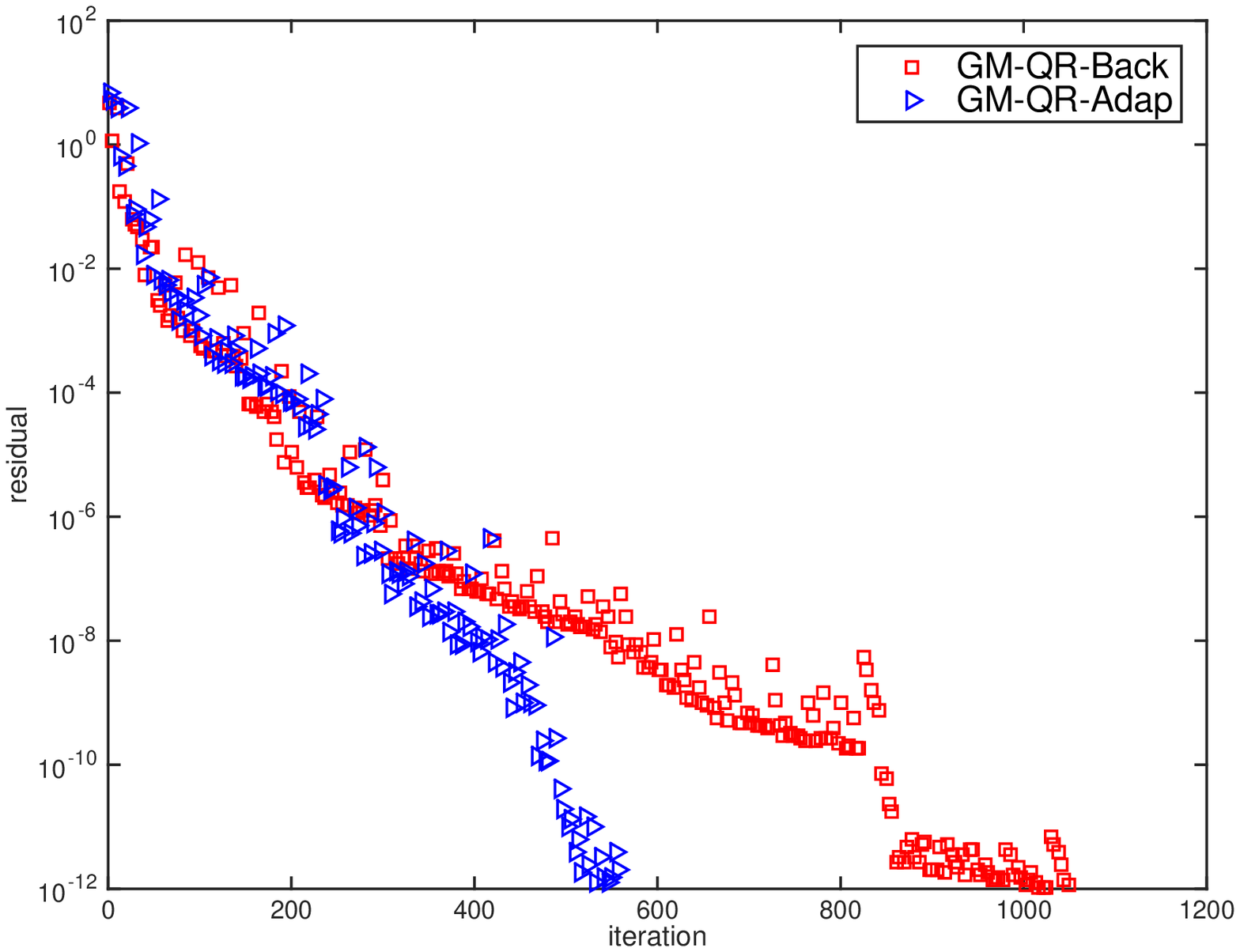}
\end{minipage}
\hspace{-0.2in}
\begin{minipage}[b]{0.5 \textwidth}
\centerline{Alanine}
\includegraphics[width=0.9\textwidth]{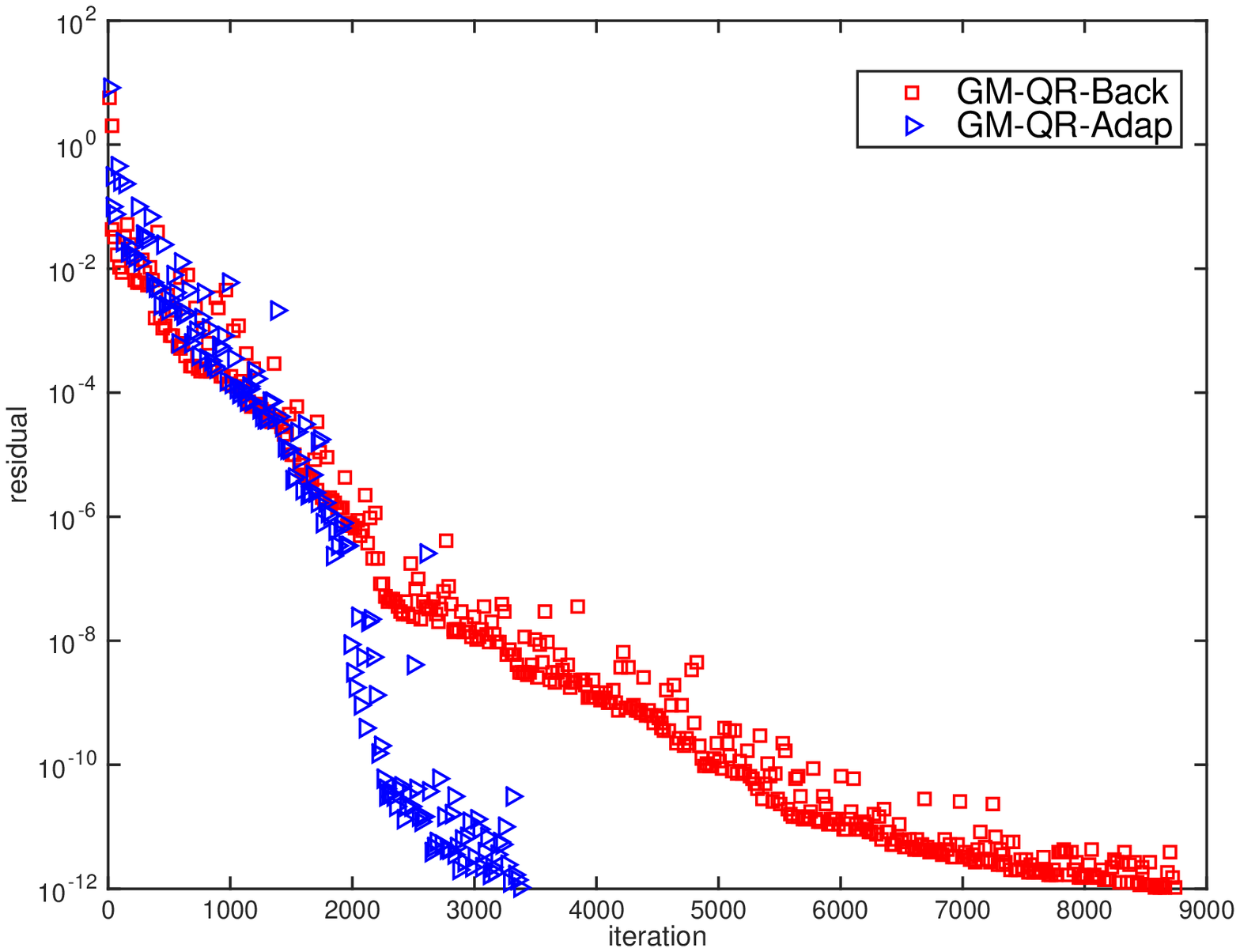}
\end{minipage}

\hspace{-50.0pt}
\par \vspace{-10.pt}
\hspace{-40.0pt}

\begin{minipage}[b]{0.5 \textwidth}
\centerline{$C_{120}$}
\includegraphics[width=0.9\textwidth]{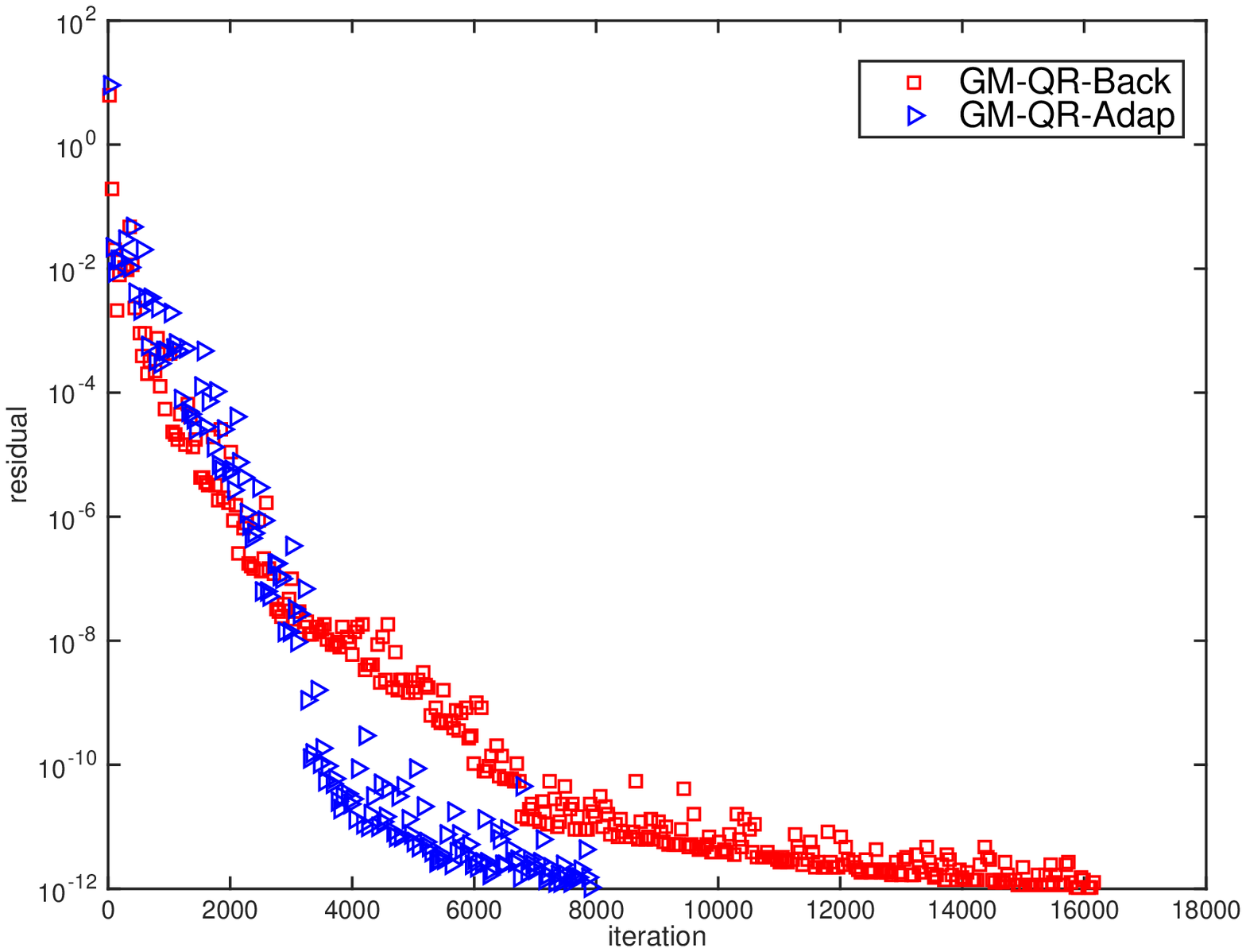}
\end{minipage}
\hspace{-0.2in}
\begin{minipage}[b]{0.5 \textwidth}
\centerline{$C_{1015}H_{460}$}
\includegraphics[width=0.9\textwidth]{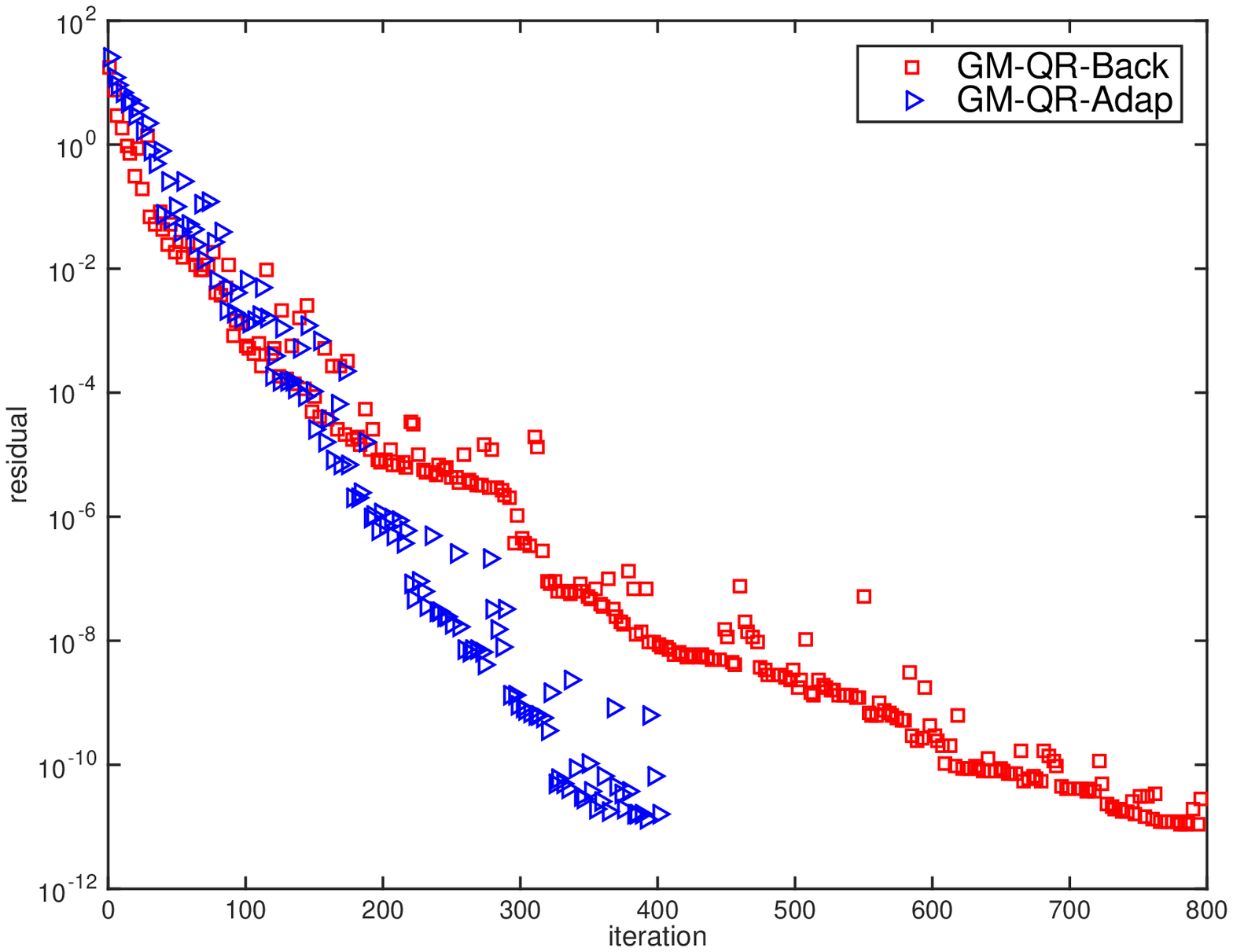}
\end{minipage}

\caption{Convergence curves of $\|\nabla_G E\|_{V^N} $ obtained by different algorithms for different systems.}
\label{f-1}
\end{figure}

We know that in our adaptive algorithm, $\nabla_G^2 E(U_n)[D_n, D_n]$ is calculated once at an iteration, which costs $NN_g^2 + 3N^2N_g + N^3$ flops while calculating $$U_{n+1} = \textup{ortho}(U_n,D_n,t_n),$$ and the corresponding $E(U_{n+1})$ needs $(2N+1)N_g^2 + (7N^2+2)N_g + O(N^3)$ flops \cite{JD} which is the main part in our computation. In Fig. \ref{f0-1} and Fig. \ref{f0-2}, we take $C_{120}$ and $C_{1015}H_{460}$ as examples to see the relationship between computational time per iteration and the number of backtracking steps at each iteration for GM-QR-Back.
\begin{figure}
\centering
\caption{relationship between computational time per step and number of backtracking steps for GM-QR-Back (for $C_{120}$).}
\includegraphics[width=0.9\textwidth]{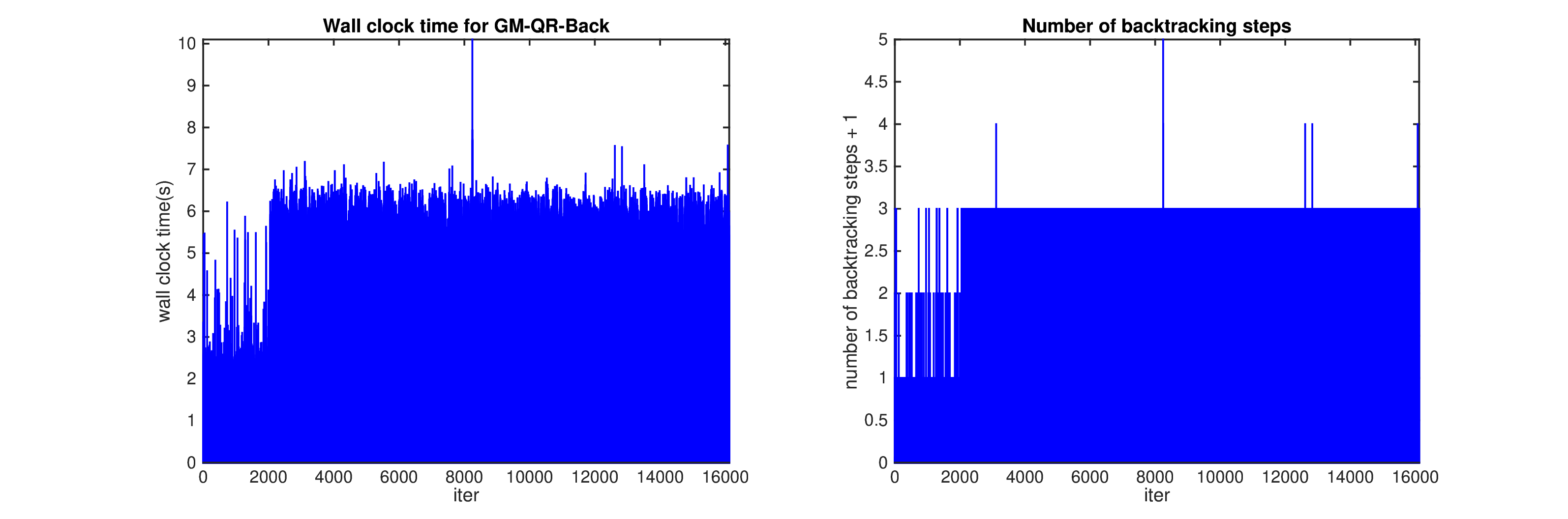} \\
\label{f0-1}
\end{figure}

\begin{figure}
\centering
\caption{relationship between computational time per step and number of backtracking steps for GM-QR-Back (for $C_{1015}H_{460}$).}
\includegraphics[width=0.9\textwidth]{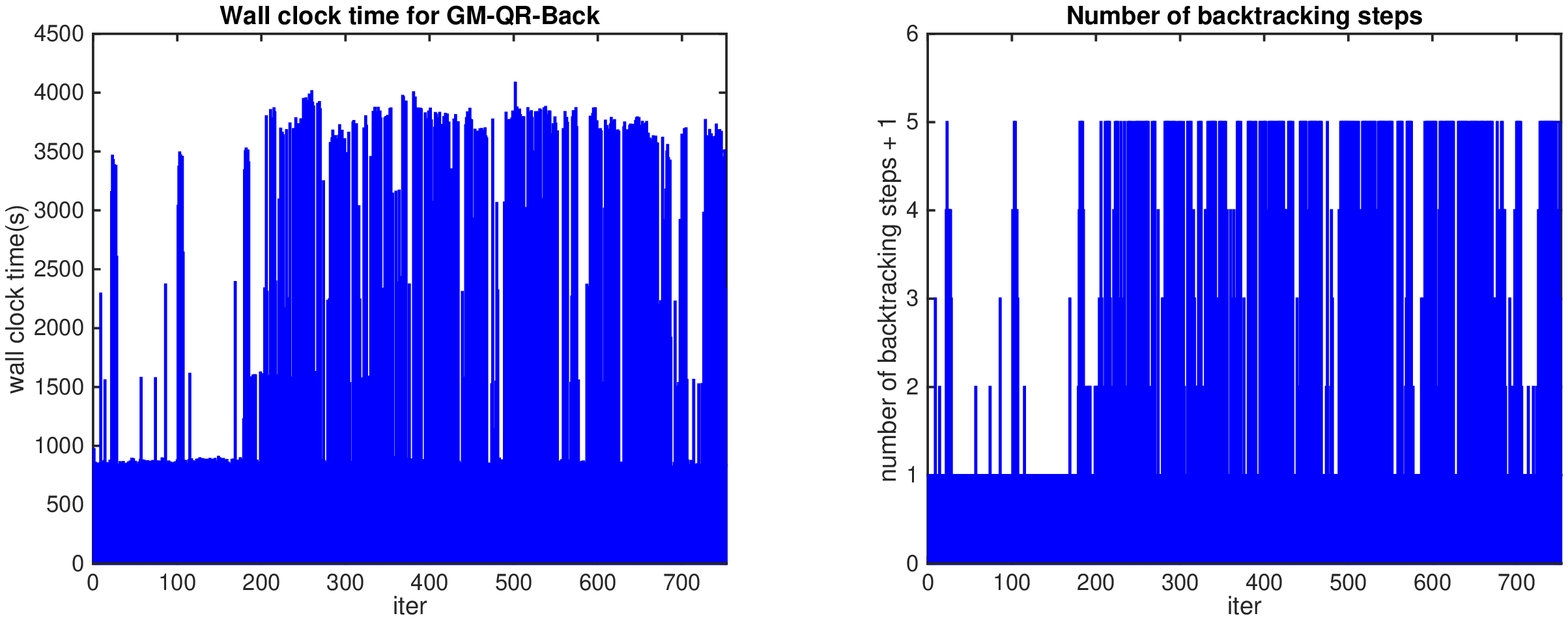} \\
\label{f0-2}
\end{figure}

As is shown in Fig. \ref{f0-1} and Fig. \ref{f0-2}, the trend of the computational time is almost the same as the change of number of backtracking steps at each iteration, which is consistent to what we predicted previously. This phenomenon shows that the orthogonalization procedure and the computation of the objective functional value are the main part of our computation.

For comparison, we show the CPU time required by GM-QR-Back and GM-QR-Adap at each step for $C_{120}$ and $C_{1015}H_{460}$ in Fig. \ref{f1} and Fig. \ref{f2} respectively.
\begin{figure}
\centering
\caption{Computational time per iter. for $C_{120}$ obtained by different algorithms.}
\hspace{12em}
\includegraphics[width=0.9\textwidth]{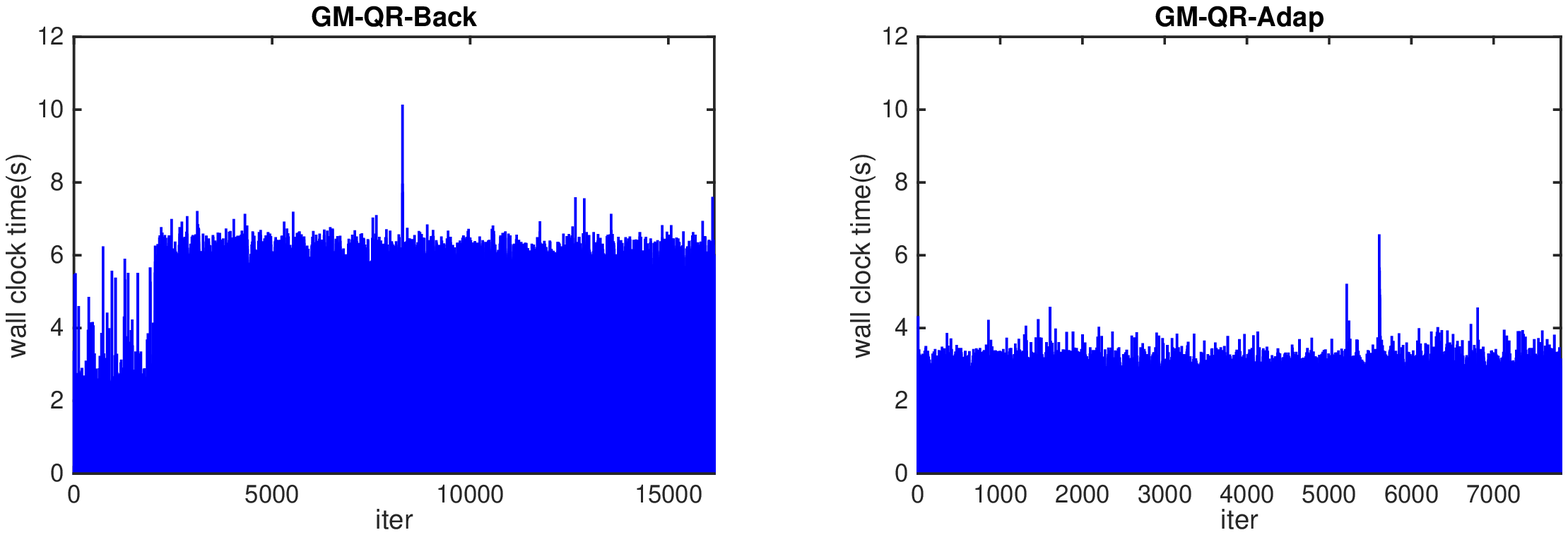} \\
\label{f1}
\end{figure}

\begin{figure}
\centering
\caption{Computational time per iter. for $C_{1015}H_{460}$ obtained by different algorithms.}
\includegraphics[width=0.9\textwidth]{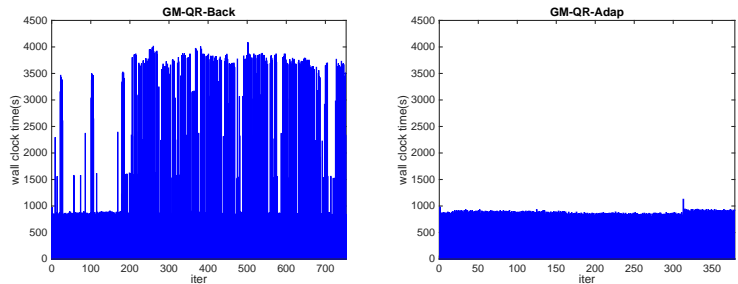} \\
\label{f2}
\end{figure}

It turns out that the computational time spent at each step in our adaptive approach is nearly a constant which approximately equals to the lowest time needed for one step in backtracking-based algorithm, in other words, the computational cost at an iteration, at which the initial step size $t_n^{\textup{initial}}$ does not satisfy \eqref{back-cond1}, reduce significantly by using our adaptive strategy.

We understand that the CG method usually converges faster than the gradient type method. In TABLE \ref{t2}, we compare the numerical results obtained by the gradient type method with our adaptive step size strategy and the CG method for electronic structure calculations(CG-QR) \cite{DLZZ} for the same systems with exactly the same settings as we mentioned before.
\begin{center}
\begin{table}[!htbp]
\caption{The numerical results for systems obtained by different algorithms.}
\label{t2}
\begin{center}
{\small
\begin{tabular}{|c| c c c c c|}
\hline
Algorithm & energy (a.u.) & iter &  $\| \nabla_G E\|_F $ & W.C.T (s)&  A.T.P.I (s) \\
\hline
\multicolumn{6}{|c|}{benzene($C_6H_6) \ \ \  N_g = 102705 \ \ \  N = 15 \ \ \  cores = 8$} \\
\hline
CG-QR        & -3.74246025E+01 & 251  & 9.01E-13 & 12.58 & 0.050 \\
GM-QR-Adap   & -3.74246025E+01 & 334  & 7.53E-13 & 11.36 & 0.034 \\
\hline
\multicolumn{6}{|c|}{aspirin($C_9H_8O_4) \ \ \  N_g = 133828 \ \ \  N = 34 \ \ \  cores = 16$} \\
\hline
CG-QR        & -1.20214764E+02 & 246  & 9.21E-13 & 29.21 & 0.119 \\
GM-QR-Adap   & -1.20214764E+02 & 327  & 8.86E-13 & 26.47 & 0.081 \\
\hline
\multicolumn{6}{|c|}{fullerene$(C_{60}) \ \ \ N_g = 191805  \ \ \  N = 120 \ \ \  cores = 16$} \\
\hline
CG-QR        & -3.42875137E+02 & 391  & 9.45E-13 & 489.00 & 1.251 \\
GM-QR-Adap   & -3.42875137E+02 & 558  & 8.17E-13 & 371.26 & 0.665 \\
\hline
\multicolumn{6}{|c|}{alanine chain$(C_{33}H_{11}O_{11}N_{11})\ \ \  N_g = 293725 \ \ \  N = 132
\ \ \  cores = 32$} \\
\hline
CG-QR        & -4.78562217E+02 & 2100  & 9.98E-13 & 2789.83  & 1.328 \\
GM-QR-Adap   & -4.78562217E+02 & 3376  & 9.99E-13 & 3185.21  & 0.943 \\
\hline
\multicolumn{6}{|c|}{carbon nano-tube$(C_{120}) \ \ \ N_g = 354093 \ \ \  N = 240 \ \ \  cores = 32$} \\
\hline
CG-QR        & -6.84467048E+02 & 3517   & 9.90E-13 & 12976.96 & 3.690 \\
GM-QR-Adap   & -6.84467048E+02 & 7929   & 9.98E-13 & 23580.85 & 2.974 \\
\hline
\multicolumn{6}{|c|}{$C_{1015}H_{460} \ \ \ N_g = 1462257 \ \ \  N = 2260 \ \ \  cores = 256$} \\
\hline
CG-QR        & -6.06369982E+03 & 266    & 9.17E-12 & 299047.84  & 1124.237\\
GM-QR-Adap   & -6.06369982E+03 & 397    & 9.67E-12 & 348390.53  & 877.558 \\
\hline
\multicolumn{6}{|c|}{$C_{1419}H_{556} \ \ \ N_g = 1828847 \ \ \  N = 3116 \ \ \  cores = 512$} \\
\hline
CG-QR        & -8.43085432E+03 & 272    & 9.71E-12 & 722678.98  & 2656.908 \\
GM-QR-Adap   & -8.43085432E+03 & 368    & 8.26E-12 & 725840.47  & 1972.393 \\
\hline
\end{tabular}}
\end{center}
\end{table}
\end{center}

As is shown in TABLE \ref{t2}, though CG-QR method needs less iterations to converge, our adaptive strategy enables the gradient type method to be comparable as CG method in computational time.
\begin{remark}
When performing CG-QR method in numerical experiments, the backtracking step is skipped. The authors in \cite{DLZZ} mentioned that a lack of backtracking may not influence the convergence numerically. After studying the step size strategy therein, we find that the initial guess of the step size $t_n^{\textup{initial}}$ used in \cite{DLZZ} is ``acceptable" in our discussion, i.e., it satisfies $\zeta_n(t_n^{\textup{initial}})>\eta$ and $t_n^{\textup{initial}}\|D_n\|\leq\vartheta_n$ when the parameters are chosen properly. This may explain the reason why the backtracking procedure can be neglected in \cite{DLZZ}. In addition, it has also been reported by the numerical experiments in \cite{DLZZ} that the gradient type method with step size \eqref{stepsize} at every iteration performs relatively bad.
\end{remark}

Consequently, we may conclude that our adaptive step size strategy can not only reduce the cost at each iteration but also accelerate the convergence of an orthogonality constrained line search method. In particular, it enables the gradient type method to be somehow comparable to the CG method, which provides an alternative way to solve an orthogonality constrained minimization problem efficiently.

\section{Concluding remarks}\label{sec-cln}
In this paper, we have set up an uniform approach for a class of line search methods for orthogonality constrained problems.
In particular, we have proposed an adaptive step sizes strategy that can reduce the
cost of choosing suitable step sizes.
We have also proved the convergence of the adaptive line search methods. 
As an application, we apply our method and strategy to solve the Kohn-Sham energy minimization problem. 
The numerical experiments show
that our adaptive approach performs better when compared with the classic backtracking-based algorithm.

Although we have applied our algorithm to electronic structure
calculations only, we believe that our adaptive strategy is applicable to other manifold
constrained problems as long as the cost of computing the retraction is expensive.
In further, our adaptive strategy can be of course incorporated into other line
search methods, for example, the algorithm based on an Armijo-type
condition in \cite{JD}. Besides, despite that we only choose the gradient type method as an example 
in our numerical experiments, it is a straightforward idea to use our adaptive step 
size strategy to other line search methods with different search directions as long as the backtracking 
step occurs frequently or is expensive.

We should emphasize that the objective function is required to be of second order
derivable to compute the estimator in our adaptive algorithm. This requirement
may be too strong in some cases for which other kinds of estimators are
demanded. Note also that in our numerical experiments, $\{\vartheta_n\}_{n=0}^\infty$ are chosen to be
a fixed number. 
There may be some better ways to determine $\{\vartheta_n\}_{n=0}^\infty$ which remains under investigation.

\section*{Acknowledgements} The authors would like to thank  Professor Xin Liu for his comments and suggestions
that improve the presentation of this paper.

\Appendix
\renewcommand{\appendixname}{Appendix~\Alph{section}}
\section{Proof and remarks of Theorem \ref{btbb}}\label{A1}

{\bf Proof of Theorem \ref{btbb}:} We only need to show that
\begin{equation}
\liminf_{n\to\infty}t_n\neq0,
\end{equation}
or else, there exists a subsequence $\{t_{n_s}\}_{s=0}^{\infty}$ such that $\lim_{s\to\infty}t_{n_s}=0$ and $\frac{t_{n_s}}{k}$ does not satisfy \eqref{Armijo}, in other words,
$$E(\textup{ortho}(U_{n_s},D_{n_s},\frac{t_{n_s}}{k}))-\mathcal{C}_{n_s}>\eta\frac{t_{n_s}}{k}\langle\nabla_G E(U_{n_s}), D_{n_s}\rangle_{V^N}.$$
It has been computed in \eqref{non-mono} that
\begin{eqnarray*}
E(U_{n_s})-\mathcal{C}_{n_s} &=& \frac{Q_n-1}{Q_n}\big(E(U_{n_s})-\mathcal{C}_{n_s-1}\big) \\
&\leq& \frac{Q_n-1}{Q_n}\eta t_{n_s-1}\langle\nabla E(U_{n_s-1}), D_{n_s-1}\rangle_{V^N}<0,
\end{eqnarray*}
which leads to
$$E(\textup{ortho}(U_{n_s},D_{n_s},\frac{t_{n_s}}{k}))-E(U_{n_s})>\eta\frac{t_{n_s}}{k}\langle\nabla_G E(U_{n_s}), D_{n_s}\rangle_{V^N}.$$
A simple calculation gives that
\begin{equation}\label{contra2}
\frac{E(\textup{ortho}(U_{n_s},D_{n_s},\frac{t_{n_s}}{k}))-E(U_{n_s})}{\frac{t_{n_s}}{k}}>\eta\langle\nabla_G E(U_{n_s}), D_{n_s}\rangle_{V^N}.
\end{equation}
For simplicity, we again denote $E(\textup{ortho}(U_{n_s},D_{n_s},t))$ by $\phi_{n_s}(t)$ as an function of $t$, then
$$\phi'_{n_s}(t) = \langle\nabla E(\textup{ortho}(U_{n_s},D_{n_s},t)), \dot{\textup{ortho}}(U_{n_s},D_{n_s},t)\rangle_{V^N} .$$
\eqref{contra2} indicates that there exists an $\xi_{n_s}\in(0,\frac{t_{n_s}}{k})$ such that
\begin{eqnarray*}
\phi'_{n_s}(\xi_{n_s}) >\eta\langle\nabla_G E(U_{n_s}), D_{n_s}\rangle_{V^N}.
\end{eqnarray*}
Since $\mathcal{M}^N$ and the set $\{D\in\mathcal{T}\mathcal{G}_N: \|D\|_{V^N}\le C\}$ are both compact, Assumption \ref{Dbound} indicates that there exists a subsequence of $\{n_s\}_{s=0}^{\infty}$ which is also denoted by $\{n_s\}_{s=0}^{\infty}$ without loss of generality, such that
$U_{n_s}\to \tilde{U}$ and $D_{n_s}\to\tilde{D}$
for some $\tilde{U}\in\mathcal{M}^N$ and $\tilde{D}\in\mathcal{T}\mathcal{G}_N$ as $s\to\infty$. Moreover, We see that $$\tilde{U}^T\tilde{D} = \lim_{s\to\infty}U_{n_s}^TD_{n_s} = 0,$$ and hence, $\tilde{D}\in\mathcal{T}_{[\tilde{U}]}\mathcal{G}_N$.

Due to {$\displaystyle\lim_{s\to\infty}\xi_{n_s}=0$}, we have
$$\lim_{s\to\infty}\phi'_{n_s}(\xi_{n_s}) = \langle\nabla E(\textup{ortho}(\tilde{U},\tilde{D},0)), \dot{\textup{ortho}}(\tilde{U},\tilde{D},0)\rangle_{V^N}\ge\eta\langle\nabla_G E(\tilde{U}), \tilde{D}\rangle_{V^N},$$
which combining with \eqref{retraction1} and \eqref{retraction2} gives that
$$\langle\nabla E(\tilde{U}), \tilde{D}\rangle_{V^N}\ge\eta\langle\nabla_G E(\tilde{U}), \tilde{D}\rangle_{V^N}.$$
Note that $\langle\nabla E(\tilde{U}), \tilde{D}\rangle_{V^N}=\langle\nabla_G E(\tilde{U}), \tilde{D}\rangle_{V^N}$ and $1-\eta>0$, we have
$$\langle\nabla_G E(\tilde{U}), \tilde{D}\rangle_{V^N} \ge 0.$$
As a result,
$$0\geq\lim_{s\to\infty}\langle\nabla_G E(U_{n_s}), D_{n_s}\rangle_{V^N} =\langle\nabla_G E(\tilde{U}), \tilde{D}\rangle_{V^N} \ge 0.$$
We obtain from \eqref{not-ortho} that
$$\lim_{s\to\infty}\|\nabla_G E(U_{n_s})\|_{V^N}^a=0,$$
which completes the proof.
\begin{remark}
The search directions $\{D_n\}_{n=0}^\infty$ satisfying \eqref{decrease-direction}, \eqref{not-ortho} and Assumption \ref{Dbound} are called ``gradient related" in \cite{AbMaSe}. We use the similar approach and extend the convergence result therein to the ``non-monotone" case. The similar result can also be found in \cite{HMWY}, but the search directions in \cite{HMWY} are fixed to be the negative gradient directions.
\end{remark}

Due to Theorem \ref{btbb}, we are able to obtain the convergence results of some existing methods under weaker assumptions. For instance, we have

\begin{itemize}
\item The gradient
type method proposed in \cite{ZZWZ} will eventually give a stationary point as long as
the gradient $\nabla E$ of the objective function is bounded.
The original result was established based on the assumption that
$\nabla E$ is Lipschitz continuous.

\item If we restart the CG method
proposed in \cite{DLZZ} periodically(or restart the algorithm when $\frac{\langle\nabla_G E(U_{n_s}), D_{n_s}\rangle_{V^N}}{\|\nabla_G E(U_n)\|_{V^N}^a}<\delta$, where $\delta>0$ is a given parameter), then the algorithm globally converges to a stationary point
for all kinds of retractions provided that $\nabla E$ is bounded.
For comparison, the original result only works for 3 particular retractions and need to assume that
$\nabla E$ is Lipschitz continuous and the Hessian of the objective function is positive defined around the
stationary point, and as a result, is a local convergence.

We point out that a restarted version of the CG method is also suggested in \cite{DLZZ} with a different restarted strategy.
\end{itemize}

\section{Detailed results of backtracking-based gradient method}\label{A2}

In this appendix, we provide some numerical results obtained by the gradient type method with different initial step size choices and different parameters to illustrate the reason why we choose the results showed in Section \ref{sec-num} for comparison and to motivate our adaptive step size strategy clearer.

As we have mentioned, the initial step size $t_n^{\textup{initial}}$ can be chosen as \eqref{tau1}, \eqref{tau2}, or \eqref{initialstep}.  We test these three cases for some small systems to determine which one to be used and denote them by GM-QR-Back-odd, GM-QR-Back-even and GM-QR-Back, respectively.  The detailed results are shown in TABLE \ref{t3}.

\begin{center}
\begin{table}[!htbp]
\caption{The numerical results obtained by backtracking-based gradient type methods with different initial step sizes.}
\label{t3}
\begin{center}
{\small
\begin{tabular}{|c| c c c c c|}
\hline
Algorithm & energy (a.u.) & iter &  $\| \nabla_G E\|_F $ & W.C.T (s)&  A.T.P.I (s) \\
\hline
\multicolumn{6}{|c|}{benzene$(C_6H_6) \ \ \  N_g = 102705 \ \ \  N = 15 \ \ \  cores = 8$} \\
\hline
GM-QR-Back-odd    & -3.74246025E+01 & 625  & 9.48E-13 & 31.39 & 0.050 \\
GM-QR-Back-even  & -3.74246025E+01 & 850  & 7.53E-13 & 42.07 & 0.049 \\
GM-QR-Back           & -3.74246025E+01 & 545  & 9.33E-13 & 24.11 & 0.044 \\
\hline
\multicolumn{6}{|c|}{aspirin$(C_9H_8O_4) \ \ \  N_g = 133828 \ \ \  N = 34 \ \ \  cores = 16$} \\
\hline
GM-QR-Back-odd    & -1.20214764E+02 & 609  & 9.98E-13 & 61.00 & 0.100 \\
GM-QR-Back-even  & -1.20214764E+02 & 583  & 3.67E-13 & 58.73 & 0.101 \\
GM-QR-Back           & -1.20214764E+02 & 471  & 9.83E-13 & 43.42 & 0.092 \\
\hline
\multicolumn{6}{|c|}{fullerene$(C_{60}) \ \ \ N_g = 191805  \ \ \  N = 120 \ \ \  cores = 16$} \\
\hline
GM-QR-Back-odd      & -3.42875137E+02 & 6597 & 9.99E-13 &6775.76 & 1.027 \\
GM-QR-Back-even    & -3.42875137E+02 & 1325 & 9.74E-13 &1205.51 & 0.910 \\
GM-QR-Back             & -3.42875137E+02 & 1050 & 9.02E-13 & 945.60 & 0.901 \\
\hline
\end{tabular}}
\end{center}
\end{table}
\end{center}

Hence, we choose $t_n^{\textup{initial}}$ as \eqref{initialstep} for both backtracking-based method and adaptive step size based method.

Besides, to confirm the truth that backtracking procedure is necessary for backtra- cking-based 
gradient type method, we test the case in which no backtracking is imposed, i.e., we choose $t_n = t_n^{\textup{initial}} $ (or, by setting $\mathcal{C}_n=+\infty$) for all $n$ and show the corresponding results in TABLE \ref{t4} in which ``GM-QR-noBack" denotes the gradient type method with $t_n = t_n^{\textup{initial}}$.

\begin{center}
\begin{table}[!htbp]
\caption{The numerical results for systems obtained by different algorithms.}
\label{t4}
\begin{center}
{\small
\begin{tabular}{|c| c c c c c|}
\hline
Algorithm & energy (a.u.) & iter &  $\| \nabla_G E\|_F $ & W.C.T (s)&  A.T.P.I (s) \\
\hline
\multicolumn{6}{|c|}{benzene($C_6H_6) \ \ \  N_g = 102705 \ \ \  N = 15 \ \ \  cores = 8$} \\
\hline
GM-QR-noBack        & -3.74246025E+01 & 445  & 9.91E-13 & 10.85 & 0.025 \\
GM-QR-Adap   & -3.74246025E+01 & 334  & 7.53E-13 & 11.36 & 0.034 \\
\hline
\multicolumn{6}{|c|}{aspirin($C_9H_8O_4) \ \ \  N_g = 133828 \ \ \  N = 34 \ \ \  cores = 16$} \\
\hline
GM-QR-noBack        & -1.20214764E+02 & 372  & 9.87E-13 & 22.66 & 0.061 \\
GM-QR-Adap   & -1.20214764E+02 & 327  & 8.86E-13 & 26.47 & 0.081 \\
\hline
\multicolumn{6}{|c|}{fullerene$(C_{60}) \ \ \ N_g = 191805  \ \ \  N = 120 \ \ \  cores = 16$} \\
\hline
GM-QR-noBack        & -3.42875137E+02 & 1442  & 9.99E-13 & 781.82 & 0.542 \\
GM-QR-Adap   & -3.42875137E+02 & 558  & 8.17E-13 & 371.26 & 0.665 \\
\hline
\multicolumn{6}{|c|}{alanine chain$(C_{33}H_{11}O_{11}N_{11})\ \ \  N_g = 293725 \ \ \  N = 132
\ \ \  cores = 32$} \\
\hline
GM-QR-noBack        & -4.78562217E+02 & 30000  & 1.34E-12 & 21371.76  & 0.712 \\
GM-QR-Adap   & -4.78562217E+02 & 3376  & 9.99E-13 & 3185.21  & 0.943 \\
\hline
\multicolumn{6}{|c|}{carbon nano-tube$(C_{120}) \ \ \ N_g = 354093 \ \ \  N = 240 \ \ \  cores = 32$} \\
\hline
GM-QR-noBack        & -6.84466094E+02 & 30000   & 3.67E-03 & 70283.80 & 2.342 \\
GM-QR-Adap   & -6.84467048E+02 & 7929   & 9.98E-13 & 23580.85 & 2.974 \\
\hline
\end{tabular}}
\end{center}
\end{table}
\end{center}

We see from TABLE \ref{t4} that though the computational time at each iteration for a backtracking-free algorithm is lower than our adaptive step size based algorithm, it can not converge within 30000 iterations (the max number of iterations we set) for alanine and $\textup{C}_{120}$. Even though it converges for some small systems, our adaptive step size strategy is still comparable or even performs better.
%
%
%

\end{document}